\documentclass[a4paper,10pt]{amsart}

\usepackage{amsmath}
\usepackage{amsfonts}
\usepackage{amssymb}
\usepackage{graphicx}
\usepackage{color}
\usepackage{soul,xcolor} 
\usepackage{cite}

\usepackage{amsthm}
\usepackage[all,cmtip]{xy}

\newcommand{\mbN}{\mathbb{N}}
\newcommand{\mbQ}{\mathbb{Q}}

\newcommand{\mcO}{\mathcal{O}}

\DeclareMathOperator{\ch}{ch}
\DeclareMathOperator{\td}{td}

\DeclareMathOperator{\Exc}{Exc}

\newcommand*{\coloneq}{\mathrel{\mathop:}=}

\theoremstyle{plain}
\newtheorem{theorem}{Theorem}[section]
\newtheorem{proposition}[theorem]{Proposition}
\newtheorem{lemma}[theorem]{Lemma}
\newtheorem{corollary}[theorem]{Corollary}

\theoremstyle{definition}

\theoremstyle{remark}
\newtheorem{remark}[theorem]{Remark}

\usepackage{hyperref}

\hypersetup{
	colorlinks=true,         
	linkcolor=blue,          
	citecolor=blue,          
	hyperfootnotes=false,
}

\begin{document}
\bibliographystyle{alpha}

\title[Rational curves on fibered Calabi--Yau manifolds]{Rational curves \\ on fibered Calabi--Yau manifolds}

\author{Simone Diverio}
\address{Simone Diverio \\  Dipartimento di Matematica \lq\lq Guido Castelnuovo\rq\rq{},
SAPIENZA Universit\`a di Roma,
Piazzale Aldo Moro 5,
00185 Roma, Italy.}
\email{diverio@mat.uniroma1.it} 

\author{Claudio Fontanari}
\address{Claudio Fontanari \\ Universit\`a degli Studi di Trento, Dipartimento di Matematica, via Sommarive 14, 38123 Povo, Trento, Italy.}
\email{claudio.fontanari@unitn.it} 

\author{Diletta Martinelli}
\address{Diletta Martinelli \\ School of Mathematics, University of Edinburgh, Peter Guthrie
Tait Road, Edinburgh SW7 2AZ, UK.}
\email{Diletta.Martinelli@ed.ac.uk}

\thanks{The first--named author was partially supported by the ANR Programme: D\'efi de tous les savoirs (DS10) 2015, \lq\lq GRACK\rq\rq, Project ID: ANR-15-CE40-0003ANR, and D\'efi de tous les savoirs (DS10) 2016, \lq\lq FOLIAGE\rq\rq, Project ID: ANR-16-CE40-0008. The second--named author was partially supported by GNSAGA of INdAM and FIRB 2012 "Moduli spaces and Applications". The third--named author was partially supported by the Stevenson Fund and by FIRB 2012 "Moduli spaces and Applications".}

\begin{abstract} 
We show that a smooth projective complex manifold of dimension greater than two endowed with an elliptic fiber space structure and with finite fundamental group always contains a rational curve, provided its canonical bundle is relatively trivial.	
As an application of this result, we prove that any Calabi--Yau manifold that admits a fibration onto a curve whose general fiber is an abelian variety always contains a rational curve.
\end{abstract}

\keywords{Elliptic fiber space, Calabi--Yau manifold, fibration, rational curve, rational multi-section, canonical bundle formula.}
\subjclass[2010]{Primary: 14J32; Secondary: 32Q55, 14E30.}
\date{}

\maketitle

\section{Introduction}

The first goal of this paper is to prove the following result about the existence of rational curves on an elliptically fibered projective manifold $X$ with some restriction on its fundamental group. We always assume that the dimension of $X$ is greater than two.

\begin{theorem}\label{thm:main}
	Let $X$ be a smooth projective manifold with finite fundamental group. Suppose there exists a projective variety $B$ and a morphism $f\colon X\to B$ such that the general fiber has dimension one. Suppose, moreover, that there exists a line bundle $L$ on $B$ such that $K_X\simeq f^* L$. Then $X$ does contain a rational curve. 
\end{theorem}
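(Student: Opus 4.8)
The plan is to argue by contradiction: assume $X$ carries no rational curve and derive an absurdity. I begin with a string of harmless reductions. Since $\pi_1(X)$ is finite, I would pass to the universal cover $\widetilde X\to X$, which is finite étale; it is again smooth projective, its Stein factorization $\widetilde X\xrightarrow{\widetilde f}\widetilde B\to B$ has one--dimensional general fibre, $K_{\widetilde X}$ is a pullback via $\widetilde f$ of a line bundle on $\widetilde B$, and a rational curve upstairs pushes down to one on $X$. So we may assume $\pi_1(X)=1$ and $f$ surjective with connected fibres; then, $B$ being normal, also $\pi_1(B)=1$. Since $X$ has no rational curve the Cone Theorem shows $K_X=f^{*}L$ has no $K_X$--negative extremal ray, hence is nef, so $L$ is nef on $B$. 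Finally, adjunction on a general fibre together with $K_X=f^{*}L$ forces that fibre to be a smooth elliptic curve, so $f$ is a genuine elliptic fibration, automatically relatively minimal because $K_X$ is $f$--trivial.

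Next I would extract a rational curve from a degenerate fibre whenever one exists. If the $j$--map $B\dashrightarrow\overline{M}_{1,1}\cong\mathbb{P}^1$ is non--constant, a general complete--intersection curve $C\subset B$ along which $j$ varies gives a non--isotrivial elliptic surface $f^{-1}(C)\to C$, whose degenerate fibres have rational components, a contradiction. If $j$ is constant but some fibre of $f$ is not a multiple of a smooth elliptic curve, that fibre itself contains a rational curve. We are left with the case that $f$ is isotrivial and every fibre is a smooth elliptic curve or a multiple of one; the canonical bundle formula of Kodaira and Fujino--Mori then yields, after at most a birational modification of the base, $L\sim_{\mathbb{Q}}K_B+\Delta_B$ with $\Delta_B=\sum\bigl(1-\tfrac1{m_i}\bigr)D_i$ supported on the multiple--fibre locus, so $(B,\Delta_B)$ is klt with nef log canonical class $\sim_{\mathbb{Q}}L$.

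Assume first $\Delta_B=0$, i.e.\ $f$ has no multiple fibre. Then $f$ is a smooth isotrivial elliptic fibration over the simply connected $B$, so the monodromy is trivial, the relative Jacobian is the constant abelian scheme $E\times B$, and $X$ is a torsor under it of class $\xi\in H^{1}(B,E)$. Projectivity of $X$ provides a line bundle of some positive relative degree $d$, so $\Pic^{d}_{X/B}\to B$ has a section and $d\,\xi=0$; but $\pi_1(B)=1$ gives $H^{1}(B,E[m])=\Hom(\pi_1(B),E[m])=0$ for every $m$, so $H^{1}(B,E)$ is torsion--free and $\xi=0$. Hence $X\cong E\times B$ and $\pi_1(X)\supseteq\mathbb{Z}^{2}$, contradicting $\pi_1(X)=1$.

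The remaining case $\Delta_B\neq 0$ is, I expect, the crux, and the argument above breaks down once $f$ genuinely has multiple fibres. Here I would combine two ideas. First, using $K_B\sim_{\mathbb{Q}}L-\Delta_B$ together with (generalized) abundance for $(B,\Delta_B)$ --- known unconditionally in low dimensions and in general the content of the Generalized Abundance Conjecture --- one reduces to a base on which $L\equiv 0$, whence $K_B\sim_{\mathbb{Q}}-\Delta_B$ is not pseudoeffective and $B$ is uniruled. Second, one must then produce the rational curve on $X$: either by lifting a rational curve of $B$ along a rational multisection chosen to meet the multiple fibres so as to create an honest degeneration, or by exploiting the finiteness of $\pi_1(X)$ --- equivalently of the orbifold fundamental group of $(B,\Delta_B)$ --- through the multiple--fibre relations, or by descending to the reduced multiple fibres $(f^{-1}(D_i))_{\mathrm{red}}$, each again a smooth elliptic fibration over $D_i$ with canonical class pulled back from $D_i$, i.e.\ a lower--dimensional instance of the same problem. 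Making this step rigorous, and pinning down exactly which form of abundance is really needed, is where the main work lies.
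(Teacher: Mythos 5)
Your opening reductions (universal cover, Stein factorization, adjunction on a general fibre to see the fibration is elliptic) coincide with the paper's, and your torsor argument in the everywhere--smooth isotrivial case is a legitimate alternative to the paper's topological lemma, which excludes that case by showing (via Ehresmann, Deligne's degeneration of the Leray spectral sequence, and simple connectedness of $X^0$) that $H^1(X^0,\mathbb Q)$ would otherwise contain $H^0(B^0,R^1\phi^0_*\underline{\mathbb Q})\simeq\mathbb Q^2$. But the proof is not complete: the case you yourself flag as the crux --- multiple fibres, $\Delta_B\neq 0$ --- is left open, and none of the three remedies you sketch closes it. In particular you cannot invoke the Generalized Abundance Conjecture, which is open (so the theorem would become conditional), and the proposed descent to the reduced multiple fibres neither obviously terminates nor reproduces the hypotheses of the theorem.

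The missing idea, which is the heart of the paper's argument, eliminates multiple fibres outright by a positivity choice made \emph{before} cutting down. Choose the very ample divisor $H$ on $B$ so that $H^{\otimes(n-2)}\otimes\mathcal O_B(L)$ is globally generated, let $C\subset B$ be a general complete intersection of $n-2$ members of $|H|$, and let $S=\phi^{-1}(C)$ be the corresponding smooth elliptic surface (relatively minimal, or we are done). Iterated adjunction gives
\[
K_S\simeq\phi^*\bigl(\mathcal O_B(L)\otimes H^{\otimes(n-2)}\bigr)\big|_S,
\]
so $K_S$ is globally generated, and hence so is its restriction to every subscheme of $S$. On the other hand, Kodaira's canonical bundle formula says that on a multiple fibre $m_iF_i$ the restriction of $K_S$ is $\mathcal O_{F_i}\bigl((m_i-1)F_i\bigr)$, which has no nonzero sections because $\mathcal O_{F_i}(F_i)$ is torsion of order exactly $m_i$. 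Therefore $S\to C$ has no multiple fibres; since $C$ meets a divisorial component of the discriminant (guaranteed by the topological lemma above --- or, in your framework, since the smooth isotrivial case is already excluded), $S\to C$ has a genuinely singular, non-multiple fibre, which contains a rational component by Kodaira's table (irreducible: nodal or cuspidal rational; reducible: a component of negative self-intersection with $K_S\cdot F_1=0$ is a smooth rational $(-2)$-curve). This single trick replaces your entire problematic case, and is where your argument needs to be repaired.
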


An \emph{elliptic fiber space} is a projective variety endowed with a fibration of relative dimension one such that its general fiber is an elliptic curve. The assumption on the canonical bundle in the statement readily implies, by adjunction, that the manifold $X$ in the statement is indeed an elliptic fiber space, whenever with connected fibers.

By a \emph{Calabi--Yau manifold} we mean a smooth projective manifold $X$ with trivial canonical bundle $K_X\simeq\mathcal O_X$ and finite fundamental group. Calabi--Yau manifolds are of interest in both algebraic geometry and theoretical physics. In particular, the problem of determining whether they do contain rational curves is important in string theory (see for instance \cite{Wit86,DSWW86}, where the physical relevance of rational curves on Calabi--Yau manifolds is discussed). Moreover, a \emph{folklore} conjecture in algebraic geometry predicts the existence of rational curves on every Calabi--Yau 
manifold, see for instance \cite[Question 1.6]{Miy94}, and \cite[Problem 10.2]{MP97}. Already in dimension three, the conjecture is open. There are results for high Picard rank (see \cite{Wil89} and 
\cite{HBW92}), in the case of existence of a non-zero, effective, non-ample divisor (see \cite{Pet91} 
and \cite{Ogu93}), and in the case of existence of a non-zero, nef, non-ample divisor (see \cite{DF14}). 
Almost nothing is known in higher dimension.

Here, we consider the case of an \emph{elliptic Calabi--Yau manifold}, i.e. a Calabi--Yau manifold which is also an elliptic fiber space. Quoting Koll\'ar \cite{Kol15}, \lq\lq $F$-theory posits that the hidden dimensions constitute a Calabi--Yau $4$-fold $X$ that has an elliptic structure with a section\rq\rq{}.

As an immediate corollary of Theorem \ref{thm:main}, we obtain that on elliptic Calabi--Yau's, there is always at least one rational curve. It is a generalization of \cite[Theorem 3.1, Case $\nu(X,L)=2$]{Ogu93}, who treated therein the three dimensional case.

\begin{corollary} \label{cor:ellCY}
	Let $X$ be an elliptic Calabi--Yau manifold. Then $X$ always contains a rational curve.
\end{corollary}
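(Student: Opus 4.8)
The plan is to deduce Corollary~\ref{cor:ellCY} directly from Theorem~\ref{thm:main}: no new geometry is required, only a verification that the hypotheses are met. First I would record what the definitions supply for free. By definition, a Calabi--Yau manifold $X$ is a smooth projective complex manifold with $K_X \simeq \mathcal{O}_X$ and finite fundamental group, so the hypothesis on $\pi_1(X)$ in Theorem~\ref{thm:main} is automatic. Being moreover an elliptic fiber space, $X$ comes equipped with a fibration $f\colon X \to B$ onto a projective variety $B$ whose general fiber is an elliptic curve; in particular the general fiber has dimension one, which is precisely the fibration hypothesis in the statement of Theorem~\ref{thm:main}.

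Next I would supply the line bundle on the base that the theorem asks for. Since $K_X \simeq \mathcal{O}_X \simeq f^*\mathcal{O}_B$, one may simply take $L = \mathcal{O}_B$, and then $K_X \simeq f^* L$. All the hypotheses of Theorem~\ref{thm:main} are therefore in place --- here one also uses the standing convention $\dim X > 2$; in dimension two a Calabi--Yau manifold is a $K3$ surface, which classically contains rational curves, while dimension one is excluded since an elliptic curve has infinite fundamental group. Applying Theorem~\ref{thm:main} then produces a rational curve on $X$, which is the assertion.

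As this is a formal consequence of the main theorem, there is essentially no obstacle to overcome: the only point deserving a line of care is the verification that the base $B$ of the elliptic fibration is a projective variety (so that it qualifies as the $B$ appearing in Theorem~\ref{thm:main}) and that $f$ is a morphism of the required type, both of which are built into the notion of elliptic fiber space employed here. Everything else flows immediately from $K_X \simeq \mathcal{O}_X$.
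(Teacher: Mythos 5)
Your proposal is correct and is exactly the argument the paper intends: the corollary is deduced from Theorem~\ref{thm:main} by taking $L=\mathcal{O}_B$, since $K_X\simeq\mathcal{O}_X\simeq f^*\mathcal{O}_B$, with the remaining hypotheses (finite fundamental group, general fiber of dimension one) built into the definitions of Calabi--Yau manifold and elliptic fiber space. Nothing further is needed.
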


\begin{remark}
	Calabi--Yau's in dimension two are just $K3$ surfaces, that are known to contain rational curves thanks to the Bogomolov--Mumford Theorem, \cite{MM83}.
\end{remark}

Conjecturally, a Calabi--Yau manifold $X$ is elliptic if and only if there exists a $(1,1)$-class $\alpha\in H^2(X,\mathbb Q)$ such that $\alpha$ is nef of numerical dimension $\nu(\alpha)=\dim X-1$ (this enters in the circle of ideas around the Generalized Abundance Conjecture for Calabi--Yau manifolds). Recall that the numerical dimension of a nef class $\alpha$ is the biggest integer $m$ such that self-intersection $\alpha^{m}$ is non zero in $H^{2m}(X,\mathbb Q)$. This conjecture is known to hold true, under the further assumption that $\alpha^{\dim X-2}\cdot c_2(X)\ne 0$, for threefolds by the work of \cite{Wil94,Ogu93} and in all dimensions by \cite[Corollary 11]{Kol15}. We can thus state the following numerical sufficient criterion for the existence of rational curves on Calabi--Yau manifolds.

\begin{corollary}\label{cor:num}
	Let $X$ be a Calabi--Yau manifold. Suppose that $X$ possesses a $(1,1)$-class $\alpha\in H^2(X,\mathbb Q)$ such that 
	\begin{itemize}
		\item the class $\alpha$ is nef,
		\item the numerical dimension $\nu(\alpha)$ of $\alpha$ is $\dim X-1$,
		\item the intersection product $\alpha^{\dim X-2}\cdot c_2(X)$ is non zero.
	\end{itemize}
	Then $X$ is elliptic and therefore it contains a rational curve.
\end{corollary}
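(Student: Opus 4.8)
The plan is to reduce the assertion to Corollary \ref{cor:ellCY}: the only thing to verify is that the three numerical hypotheses on $\alpha$ already force $X$ to be an elliptic fiber space, which is precisely the content of \cite[Corollary 11]{Kol15} recalled in Section \ref{sec:kol}.

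First I would replace the class $\alpha$ by a genuine line bundle. Since $\pi_1(X)$ is finite, $b_1(X)=0$, hence $h^{0,1}(X)=0$ and $\Pic^0(X)=0$; by the Lefschetz theorem on $(1,1)$-classes, $H^{1,1}(X)\cap H^2(X,\mathbb{Q})$ is identified with $\mathrm{NS}(X)_{\mathbb{Q}}$, so $\alpha$ is the class of a $\mathbb{Q}$-divisor. Clearing denominators we may assume $\alpha=c_1(L)$ for a line bundle $L$ on $X$; it is nef, satisfies $\nu(L)=\dim X-1$, and $L^{\dim X-2}\cdot c_2(X)\ne 0$.

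Next, by \cite[Corollary 11]{Kol15} these hypotheses imply that the conjectural characterisation quoted above holds for $(X,L)$: $X$ is elliptic, i.e. there is a fibration $f\colon X\to B$ onto a normal projective variety with $\dim B=\nu(L)=\dim X-1$ and one-dimensional general fibre $F$ (concretely, $L$ becomes semiample and $f$ is its Iitaka fibration). Since $K_X\simeq\mathcal{O}_X$, adjunction forces $K_F\simeq\mathcal{O}_F$, so $F$ is a smooth curve of genus one and $X$ is an elliptic Calabi--Yau manifold. Corollary \ref{cor:ellCY} then yields a rational curve on $X$, proving the claim. (Alternatively one can bypass Corollary \ref{cor:ellCY} and apply Theorem \ref{thm:main} directly to $f$, using that $\pi_1(X)$ is finite and $K_X\simeq\mathcal{O}_X=f^{*}\mathcal{O}_B$.)

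I do not expect any genuine obstacle beyond invoking \cite[Corollary 11]{Kol15}: that statement carries all the geometric weight here, its proof resting on abundance-type results for nef classes on Calabi--Yau manifolds which lie well outside the scope of the present argument, so it is simply cited; the remaining steps are routine.
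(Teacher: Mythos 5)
Your proposal is correct and follows essentially the same route as the paper: the corollary is obtained by invoking \cite[Corollary 11]{Kol15} (whose proof is sketched in Section \ref{sec:kol}) to conclude that the nef class of numerical dimension $\dim X-1$ with $\alpha^{\dim X-2}\cdot c_2(X)\ne 0$ is semiample, so that its Iitaka fibration exhibits $X$ as an elliptic fiber space, and then applying Corollary \ref{cor:ellCY}. Your preliminary step identifying $\alpha$ with the class of a $\mathbb{Q}$-divisor via finiteness of $\pi_1(X)$ and the Lefschetz $(1,1)$-theorem is a routine point the paper leaves implicit, and is handled correctly.
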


In order to discuss another corollary, let us recall the following. Suppose that $X$ is a projective manifold with semi-ample canonical bundle, \textsl{i.e.} some tensor power of $K_X$ is globally generated. Then there exists on $X$ an algebraic fiber space structure (see Subsection \ref{sec:notconv} for the definition) $\phi\colon X\to B$, called the semi-ample Iitaka fibration. This algebraic fiber space has the property that $\dim B=\kappa(X)$ and that there exists an ample line bundle $A$ over $B$ such that $K_X^{\otimes\ell}\simeq\phi^*A$ (see \cite[Theorem 2.1.27]{Laz04}). Here, $\ell$ is the \emph{exponent} of the sub-semigroup of natural numbers $m$ such that $K_X^{\otimes m}$ is globally generated. In particular, if every sufficiently large power of $K_X$ is free, then $m=1$. 

\begin{corollary}
Let $X$ be a smooth projective manifold with finite fundamental group and Kodaira dimension $\kappa(X)=\dim X-1$. Suppose that $K_X$ is semi-ample of exponent $\ell=1$. Then $X$ contains a rational curve.
\end{corollary}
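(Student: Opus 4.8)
The plan is to deduce this directly from Theorem~\ref{thm:main} applied to the semi-ample Iitaka fibration. By the discussion recalled just before the statement, semi-ampleness of $K_X$ produces an algebraic fiber space $\phi\colon X\to B$ with $B$ projective, $\dim B=\kappa(X)$, and $K_X^{\otimes\ell}\simeq\phi^*A$ for some ample line bundle $A$ on $B$. Since we are assuming the exponent is $\ell=1$, this says exactly $K_X\simeq\phi^*A$.

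Next I would use the numerical hypothesis $\kappa(X)=\dim X-1$ to pin down the relative dimension of $\phi$: since $\phi$ is an algebraic fiber space, it is surjective with connected fibers, and $\dim B=\kappa(X)=\dim X-1$, so a general fiber has dimension $\dim X-\dim B=1$. Consequently $X$, together with $f:=\phi\colon X\to B$ and $L:=A$, satisfies every hypothesis of Theorem~\ref{thm:main}: $X$ is a smooth projective manifold with finite fundamental group, $B$ is a projective variety, the general fiber of $f$ has dimension one, and $K_X\simeq f^*L$. Invoking Theorem~\ref{thm:main} then gives a rational curve on $X$, which is what we want.

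There is essentially no serious obstacle: the whole content is that the input demanded by Theorem~\ref{thm:main} is an \emph{honest morphism} defined on $X$ itself together with a genuine (not merely $\mathbb{Q}$-linear) isomorphism $K_X\simeq f^*L$. Both features are supplied precisely by the two standing hypotheses of the corollary — semi-ampleness, which makes the Iitaka map a morphism on $X$ rather than only a rational map or a map from a higher birational model, and the normalization $\ell=1$, which lets us pull back $K_X$ itself instead of a power. Once these are noted, the reduction to Theorem~\ref{thm:main} is immediate.
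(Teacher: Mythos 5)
Your proposal is correct and is precisely the argument the paper intends: the corollary is stated immediately after the discussion of the semi-ample Iitaka fibration exactly so that one can take $f=\phi$, $L=A$, note that $\ell=1$ gives $K_X\simeq\phi^*A$ and that $\dim B=\kappa(X)=\dim X-1$ forces one-dimensional general fibers, and then invoke Theorem~\ref{thm:main}. No further comment is needed.
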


Observe that the hypothesis of semi-ampleness of $K_X$ can actually be relaxed into nefness. This is because the numerical dimension of a nef line bundle is always greater than or equal to its Kodaira--Iitaka dimension, so that $\nu(K_X)\ge\dim X-1$. Now, if $\nu(K_X)=\dim X$ then $K_X$ would be big and thus $X$ of general type, contradicting $\kappa(X)=\dim X-1$. Thus, we necessarily have $\nu(X)=\kappa(X)=\dim X-1$, and so $K_X$ is semi-ample by \cite[Theorem 1.1]{Kaw85}. On the other hand, the hypothesis on the exponent $\ell=1$ of $K_X$ cannot be dropped in our proof (and, in general, it is not automatic).


The second goal of the paper is to prove an application of Theorem \ref{thm:main}, where we deal with Calabi--Yau manifolds endowed with a fibration onto a curve whose fibers are abelian varieties.

\begin{theorem} \label{thm:applic}
Let $X$ be a Calabi--Yau manifold that admits a fibration $\pi\colon X\to C$ onto a curve whose general fibers are abelian varieties. Then $X$ does contain a rational curve.
\end{theorem}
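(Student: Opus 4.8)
The plan is to deduce this from Theorem~\ref{thm:main}. Write $n=\dim X$; the general fibre $A$ of $f\colon X\to C$ is an abelian variety of dimension $n-1$, and if $n=2$ then $X$ is a $K3$ surface, which contains a rational curve by the Bogomolov--Mumford theorem \cite{MM83}, so assume $n\ge3$. Since $\pi_1(X)$ is finite, $b_1(X)=0$, and the injection $f^*\colon H^1(C,\mathbb Q)\hookrightarrow H^1(X,\mathbb Q)=0$ forces $C\cong\mathbb P^1$. Feeding $K_X\simeq\mathcal O_X$ into the canonical bundle formula for $f$ (its fibres have trivial dualizing sheaf) yields $K_{\mathbb P^1}+B_f+M_f\sim_{\mathbb Q}0$ with $B_f\ge0$ the discriminant part and $M_f$ the nef moduli part, hence $\deg B_f+\deg M_f=2$; in particular $f$ cannot be smooth over all of $\mathbb P^1$, because a smooth abelian fibration over $\mathbb P^1$ has fundamental group isomorphic to $\mathbb Z^{2(n-1)}$ modulo a cyclic subgroup, which is infinite.

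The key step is to split off a one-dimensional piece of the fibre so as to land in the setting of Theorem~\ref{thm:main}. Let $A_\eta$ be the generic fibre, an abelian variety over $K=\mathbb C(\mathbb P^1)$. If $A_\eta$ contains an elliptic curve defined over $K$ --- e.g., by Poincaré complete reducibility, whenever a $K$-simple isogeny factor of $A_\eta$ is an elliptic curve --- then spreading it out produces a sub-elliptic scheme $\mathcal E\subset X$ over $\mathbb P^1$, and the fibrewise quotient is a dominant rational map $g\colon X\dashrightarrow B:=X/\mathcal E$ with connected one-dimensional fibres and $\dim B=n-1$. On a smooth model $X'\to X$ resolving $g$, and after a relative minimal model program over $B$ arranged so that $K_{X'}$ becomes a pull-back from $B$ (possible since $K_{X'}$ is effective, $g$-vertical and exceptional), Theorem~\ref{thm:main} applies to $X'\to B$ --- $\pi_1(X')=\pi_1(X)$ being finite --- and produces a rational curve inside a fibre of $g$; this curve is disjoint from the (vertical, exceptional) locus contracted by $X'\to X$, hence descends to a rational curve on $X$. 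If instead $A_\eta$ has a proper nontrivial abelian subvariety but no elliptic one, the analogous quotient lowers the relative dimension and the canonical bundle formula shows the quotient $Y\to\mathbb P^1$ is again Calabi--Yau --- conclude by induction on $n$ --- or has $-K_Y$ nef and nonzero --- conclude by the cone theorem --- and the resulting rational curve is carried back to $X$ along the intermediate abelian fibration. Finally, if $A_\eta$ has no one-dimensional abelian subvariety over $K$, one argues directly: by the first paragraph $f$ has a singular fibre; were no fibre of $f$ to contain a rational curve, then by the structure theory of degenerations of abelian varieties every singular fibre would be a multiple $m_iA_i$ of a smooth abelian variety, whence $B_f=\sum_i(1-1/m_i)[p_i]$ with $\sum_i(1-1/m_i)<2$, so $(\mathbb P^1;m_1,\dots,m_k)$ would be a spherical orbifold, the family would become isotrivial over a finite cover $\mathbb P^1\to(\mathbb P^1;m_1,\dots,m_k)$, $X$ would be birational to $(A\times\mathbb P^1)/G$ for a finite group $G$, and --- replacing $G$ by the image of the monodromy in $\Aut(A)$ --- the projection onto $A/G$ would have general fibre $\mathbb P^1$, making $X$ uniruled and contradicting $K_X\simeq\mathcal O_X$.

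The main obstacle is the interaction between the two fibration structures and the canonical class. Concretely, one must: control the discriminant and moduli parts of the canonical bundle formula for the intermediate quotients, so as to guarantee that they remain Calabi--Yau (or acquire a non-nef canonical class); produce a birational model of $X$ on which Theorem~\ref{thm:main} applies verbatim and on which the rational curve it furnishes survives, i.e., is not contracted back to a point of $X$; and make precise the structure theory of singular fibres of an abelian fibration --- that a fibre devoid of rational curves must be a multiple of a smooth abelian variety --- which relies on semistable reduction. By comparison, the reduction $C\cong\mathbb P^1$, the Poincaré splitting, and the uniruledness argument in the rigid case are essentially formal.
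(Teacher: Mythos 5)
Your strategy---splitting an elliptic or abelian subvariety off the generic fibre $A_\eta$ and quotienting---runs into steps that do not close, and the most serious one is the transport of rational curves. In your second case (a positive-dimensional abelian subvariety that is not elliptic) you find a rational curve on the quotient $Y$ and assert it is ``carried back to $X$ along the intermediate abelian fibration''; but rational curves do not lift along fibrations with abelian fibres: the preimage of a rational curve $R\subset Y$ under $X\dashrightarrow Y$ is merely an abelian fibration over $\mathbb{P}^1$, which need not contain any rational curve and is certainly not Calabi--Yau, so neither the inductive hypothesis nor any cone-theorem argument applies to it. The elliptic case has the same defect one level down: the rational curve furnished by Theorem \ref{thm:main} lives on a birational modification $X'$ of $X$, inside a singular fibre of the new elliptic fibration, and nothing prevents that fibre from lying over the indeterminacy locus of $X\dashrightarrow B$ and being contracted by $X'\to X$; moreover, the claim that a relative MMP can arrange $K_{X'}$ to be an honest pull-back from $B$ while keeping $X'$ smooth (both of which Theorem \ref{thm:main} requires) is unsubstantiated. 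Finally, your third case rests on the assertion that a singular fibre of an abelian fibration devoid of rational curves must be a multiple of a smooth abelian variety; this is a deep structural statement that does not follow from semistable reduction in any direct way, since semistable reduction replaces $X$ by a different birational model after a base change and the rational curves found there face the same descent problem (also, the canonical bundle formula only gives $\deg B_f\le 2$, not $<2$, so the Euclidean orbifold case needs separate treatment).

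For contrast, the paper's proof uses no geometry of the abelian fibres beyond the single fact $c_2(X)\cdot F=0$ (Remark \ref{rmk:abel}) and never leaves $X$. It considers the pencil $N_t=H-tF$ with $t_0$ chosen so that $(N_{t_0})^n=0$. If every $N_t$ with $t<t_0$ is nef, then $N_{t_0}$ is a nef class of numerical dimension $n-1$ with $c_2(X)\cdot N_{t_0}^{n-2}=c_2(X)\cdot H^{n-2}>0$, so \cite[Corollary 11]{Kol15} makes $X$ an elliptic fibre space and Corollary \ref{cor:ellCY} applies. Otherwise some $N_{\bar t}$ is non-nef with $(N_{\bar t})^n>0$; the vanishing of Lemma \ref{lem:van} plus Riemann--Roch makes a multiple of it effective, and the logarithmic cone theorem produces the rational curve directly on $X$ (Lemma \ref{lem:key}). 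If you want to salvage your approach, the missing ingredient throughout is a mechanism guaranteeing that a rational curve found on an auxiliary model or quotient actually descends to, or lifts to, a rational curve on $X$ itself; absent that, an intrinsic numerical argument of the paper's kind is the safer route.
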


For explicit examples of fibrations as in Theorem \ref{thm:applic} we refer to \cite[Theorem 4.9]{Ogu93} and to \cite{GP01}. 

A nice feature of the proof of the above theorem is that the rational curves we find are not in some degenerate fiber, but rather tend to be transversal to the original fibration. This fact has a couple of interesting corollaries.

\begin{corollary}\label{cor:multsect}
If $X$ is as in Theorem \ref{thm:applic}, then $\pi\colon X\to C$ has a rational multi-section, \textsl{i.e.} there exists a rational irreducible curve $R\subset X$ such that $\pi|_R\colon  R\to C$ is finite surjective.
\end{corollary}

To state the second consequence, recall that the Kobayashi pseudodistance $d_X$ on a complex space $X$ can be defined as the largest pseudodistance $\delta_X$ on $X$ such that, for any holomorphic map $h \colon \Delta \to X$ from the complex unit disc to $X$, we have $\delta_X(h(p),h(q)) \leq \rho(p,q)$, where $\rho$ is the Poincar{\'e} distance on $\Delta$ (we refer to \cite{Kob98} for more details on this subject). One basic feature of the Kobayashi pseudodistance is that not only holomorphic maps from the Poincar\'e disc to a complex space are distance decreasing, but also general holomorphic maps from any complex space, namely if $f\colon Y\to X$ is any holomorphic map between complex spaces then for all $p,q\in Y$ we have $d_Y(p,q)\ge d_X(f(p),f(q))$. In particular, since $d_Y\equiv 0$ for $Y$ the complex plane, $Y$ a rational curve, or $Y$ a complex torus, if two points of a complex space can be joined by (a chain of) such a $Y$, then these two points have Kobayashi distance zero. Our results, together with this last observation, imply the following.

\begin{corollary}\label{cor:kob}
If $X$ is as in Theorem \ref{thm:applic}, then the Kobayashi pseudodistance $d_X$ on $X$ vanishes identically.
\end{corollary}

This proves a (very) particular case of one of the Campana conjectures on special varieties, see \cite[Conjecture 9.2]{Cam04}.

\subsubsection*{Acknowledgements}

This project started during the visit of the third--named author at the Institut de Math\'ematiques de Jussieu--Paris Rive Gauche and continued during her visit at the Department of Mathematics at the University of Trento. She wishes to thank both institutes for the warm hospitality. 

The first--named author thanks A. H\"oring, A.-S. Kaloghiros and A. Rapagnetta for useful conversations along the years around the subject presented here.

The second--named and the third--named authors would like to thank M. Andreatta, E. Ballico, C. Ciliberto, E. Floris, V. Lazi\'c and A. Petracci for important comments and discussions.

All authors thank R. Svaldi for nice conversations on the subject of the present paper.

\subsection{Notation and conventions}\label{sec:notconv}

We work over the field of complex numbers $\mathbb C$. An \emph{algebraic fiber space} is a surjective proper mapping $f\colon X\to Y$ of projective varieties such that $f_*\mathcal O_X\simeq\mathcal O_Y$; in particular it has connected fibers and if $X$ is normal, so is $Y$. Given a holomorphic (proper) surjective map $f\colon X\to Y$ of smooth complex manifolds, we say that $y\in Y$ is a \emph{regular value} for $f$ if for all $x\in f^{-1}(y)$ the differential $df(x)\colon T_{X,x}\to T_{Y,y}$ is surjective; the set of \emph{singular values} for $f$, \textsl{i.e.} the complement of the set of regular values for $f$, is a proper closed analytic subset of $Y$.

\section{Proof of Theorem \ref{thm:main}}

In this section we shall prove the following result, which will readily imply Theorem \ref{thm:main}.

\begin{theorem}\label{thm:core}
	Let $X$ be a smooth projective manifold. Suppose that $X$ is simply connected and that it is endowed with an elliptic fiber space structure $\phi\colon X\to B$. Suppose moreover that there exist a line bundle $L$ on $B$ such that $K_X\simeq \phi^* L$. Then $X$ does contain a rational curve.
\end{theorem}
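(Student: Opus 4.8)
The strategy is to exploit the canonical bundle formula for the elliptic fibration $\phi\colon X\to B$ together with the assumption $K_X\simeq\phi^*L$ to produce strong positivity or triviality constraints on $B$ and on $L$, and then to play off two cases according to whether or not $X$ contains a rational curve automatically via a standard bend-and-break/Mori-theoretic argument. Concretely, by the Kodaira-type canonical bundle formula for elliptic fiber spaces (in the form due to Kodaira, Ueno, Fujita, and in the adjunction-theoretic form of Ambro--Fujino--Mori), after passing to a suitable birational model one may write
\[
K_X \simeq_{\mathbb Q} \phi^*\!\left(K_B + \Delta_B + M_B\right),
\]
where $\Delta_B$ is the (effective) discriminant divisor recording the multiple fibers and the singular fibers, and $M_B$ is the moduli part, which is nef (and $\mathbb Q$-linearly equivalent to a pull-back of an ample or big class from the base of the $j$-map when the $j$-invariant is non-constant). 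Comparing with $K_X\simeq\phi^*L$ and using that $\phi^*$ is injective on $\mathrm{Pic}$ modulo torsion gives $L\simeq_{\mathbb Q}K_B+\Delta_B+M_B$.

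**The dichotomy.** First I would treat the case where $B$ itself contains a rational curve $C$: then either $C$ lifts (possibly after normalization and base change) to a rational curve in $X$, or $\phi^{-1}(C)$ is a surface fibered in elliptic curves over $\mathbb P^1$ — an elliptic surface — and such a surface contains a rational curve (for instance a section, a multisection coming from torsion, or a component of a singular fiber, or else it is birational to a product $E\times\mathbb P^1$ handled directly; in the product case one needs the finite fundamental group hypothesis on $X$ to rule out étale covers that would destroy rationality, but $\pi_1(X)$ finite forces control here). Conversely, if $B$ contains no rational curve, then $B$ is not uniruled, hence by Boucksom--Demailly--Paun--Peternell (or Mori's cone theorem applied contrapositively) $K_B$ is pseudoeffective; combined with $\Delta_B\ge 0$ effective and $M_B$ nef, we get that $L\simeq_{\mathbb Q}K_B+\Delta_B+M_B$ is pseudoeffective, and moreover we can extract some positivity. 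If $L$ is not numerically trivial, then $K_X=\phi^*L$ is a non-trivial pseudoeffective (indeed effective, up to $\mathbb Q$-multiples) divisor that is not big — so $X$ is not of general type but has a non-zero effective pluricanonical divisor; here one invokes the results on rational curves in the presence of an effective, non-big, base-point-free-ish canonical class, or runs a minimal model program / bend-and-break using that the Iitaka fibration of $K_X$ is exactly $\phi$. The simply-connected hypothesis then enters, as in Oguiso's threefold argument, to guarantee that the generic fiber cannot be torsion-trivial in a way that prevents multisections, and that there genuinely exist multiple fibers or a non-constant $j$-map — one of which forces a rational curve (a multiple fiber reduction or a component of a degenerate Kodaira fiber of type other than $I_0$).

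**The core case.** The genuinely hard case is $L\equiv 0$, i.e. $K_X$ numerically trivial, so $X$ is (after the finite-fundamental-group reduction) essentially a Calabi--Yau fibered in elliptic curves over a base $B$ with $K_B+\Delta_B+M_B\equiv 0$; since $\Delta_B\ge 0$ and $M_B\succeq 0$ and $-K_B$ must therefore be pseudoeffective, $B$ is rationally connected or at least uniruled unless everything is trivial. If $B$ is uniruled it contains a rational curve and we are back to the dichotomy above; if instead $B$ is an abelian variety (the boundary case $K_B\equiv 0$, $\Delta_B=0$, $M_B\equiv 0$) then $X\to B$ is an isotrivial smooth elliptic fibration over an abelian base, so $X$ itself has an étale cover that is an abelian variety — but $\pi_1(X)$ finite forbids this, a contradiction, so this sub-case does not occur. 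I expect the main obstacle to be handling the isotrivial case over a uniruled base cleanly: one must ensure that when $\phi$ is smooth and isotrivial over a rational curve $C\subset B$, the pull-back $\phi^{-1}(C)\to C$, though it may be an étale-locally-trivial $E$-bundle over $\mathbb P^1$, still contains a rational curve — this uses that such a bundle is either $E\times\mathbb P^1$ or a finite quotient thereof, and in either case a multisection coming from the (finite) monodromy or from a torsion section gives rationality, while the simple-connectedness of $X$ prevents the pathological étale-covering configurations. Packaging the canonical bundle formula correctly on the possibly-singular base $B$ (or after a birational modification, tracking the modification's effect on $\pi_1$ and on the existence of rational curves) is the technical heart of the argument.
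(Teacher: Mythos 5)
Your proposal does not follow the paper's route and, as written, has genuine gaps that prevent it from closing. The most concrete one is the claim, in your dichotomy, that if $C\subset B$ is a rational curve then the elliptic surface $\phi^{-1}(C)\to C\simeq\mathbb P^1$ ``contains a rational curve (a section, a multisection coming from torsion, or a component of a singular fiber, \dots)''. This is false for elliptic surfaces over $\mathbb P^1$ in general: a bielliptic surface admits an elliptic fibration onto $\mathbb P^1$ (with multiple fibers having smooth elliptic reduction) and contains no rational curve at all. So the multiple-fiber case cannot be waved away; it is exactly the point where the hypothesis $K_X\simeq\phi^*L$ must be used, and your argument never uses it beyond feeding it into the canonical bundle formula as a $\mathbb Q$-linear equivalence. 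Moreover, your ``core case'' ($L\equiv 0$) is explicitly left open (``I expect the main obstacle to be\dots''), your non-uniruled branch relies on unspecified ``results on rational curves in the presence of an effective, non-big canonical class'', and the sub-case analysis there is not exhaustive ($-K_B$ pseudoeffective does not force $B$ uniruled or abelian).

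For comparison, the paper's proof is much more elementary and bypasses the Ambro--Fujino--Mori formula on $B$ entirely. It first uses Kawamata's theorem to assume $\phi$ equidimensional, then uses simple connectedness via Deligne's degeneration of the Leray spectral sequence to show the discriminant locus of $\phi$ must contain a divisor $Z_0\subset B$. It then cuts $B$ by general members of a very ample system $|H|$, chosen so that $H^{\otimes(n-2)}\otimes\mathcal O_B(L)$ is globally generated, obtaining a relatively minimal elliptic surface $S\to C$ which necessarily meets $Z_0$ and hence has a singular fiber. The punchline is that $K_S\simeq\phi^*(\mathcal O_B(L)\otimes H^{\otimes(n-2)})|_S$ is globally generated, which is incompatible (via Kodaira's canonical bundle formula, since $\mathcal O_{F_i}((m_i-1)F_i)$ has no sections) with the existence of a multiple fiber; a singular non-multiple fiber then always has a rational component by Kodaira's classification. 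If you want to salvage your approach, the missing idea you need is precisely this: use $K_X\simeq\phi^*L$ twisted by an ample pullback to get global generation of the canonical bundle of a suitably chosen elliptic surface inside $X$, and use $\pi_1(X)=\{1\}$ to guarantee that surface actually has a singular fiber.
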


Let us first show how Theorem \ref{thm:core} implies Theorem \ref{thm:main}. 

\begin{proof}[Proof of Theorem \ref{thm:main}]
	Let $f\colon X\to B$ as in Theorem \ref{thm:main}. The universal cover $\pi\colon\tilde X\to X$ of $X$ is smooth and projective. Next, consider the Stein factorization of $f\circ\pi$
	$$
	\xymatrix{
		\tilde X \ar[r]^{\phi} \ar@/_/[rr]_{f\circ\pi}& B' \ar[r]^{\nu}  & B,
	}
	$$
	so that $\phi$ has connected fibers, and $B'$ is a normal projective variety. Since $K_{\tilde X}\simeq \pi^* K_X$, we obtain that $K_{\tilde X}\simeq(f\circ\pi)^*L=(\nu\circ\phi)^*L=\phi^* L'$, where $L'$ is the line bundle on $B'$ given by $\nu^* L$. Moreover, $\phi\colon \tilde X\to B'$ is an elliptic fiber space since by construction it is a fiber space whose general fiber has dimension one and, moreover, has trivial canonical bundle. Indeed, let $B^0\subset B'$ the non-empty Zariski open set of regular point of $B'$ which are also regular values for $\phi$, so that $\phi^0=\phi|_{\phi^{-1}(B^0)}$ is a proper holomorphic submersion. Then over $\tilde X^0=\phi^{-1}(B^0)$, the relative tangent bundle sequence 
	\begin{equation} \label{eq:rel_tangent}
	0\to T_{\tilde X^0/B^0}\to T_{\tilde X^0}\to (\phi^0)^*T_{B^0}\to 0
	\end{equation}
	is a short exact sequence of vector bundles. Restricting to one fiber $E$ and taking the determinant of the dual sequence gives a (non canonical) isomorphism
	$$
	K_E\simeq K_{\tilde X^0}|_{E}=K_{\tilde X}|_{E}\simeq \phi^*L|_{E}\simeq\mathcal O_E,
	$$
	and thus $E$ is an elliptic curve.
	Therefore, Theorem \ref{thm:core} applies to $\phi\colon\tilde X\to B'$ and we deduce that $\tilde X$ contains a rational curve $\tilde R\subset\tilde X$. But then $R=\pi(\tilde R)\subset X$ is a rational curve in $X$. 
\end{proof}

We now start the proof of Theorem \ref{thm:core}. We first observe that thanks to the following result of Kawamata, which we state in a slightly simplified version, we can suppose that every fiber of $\phi$ is one dimensional.

\begin{theorem}[Kawamata {\cite[Theorem 2]{Kaw91}}] \label{thm:kaw}
	Let $f\colon X\to Y$ be a surjective projective morphism, where $X$ is smooth and $-K_X$ is $f$-nef (that is, it intersects non negatively the curves which are contracted by $f$). Then any irreducible component of $\operatorname{Exc}(f)=\{x\in X\mid \dim  f^{-1}(f(x))>\dim X-\dim Y\}$ is uniruled.
\end{theorem}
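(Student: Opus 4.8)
The plan is to prove uniruledness of a fixed irreducible component $W$ of $\Exc(f)$ by producing, through a very general point $w\in W$, a rational curve contained in the fibre $F=f^{-1}(f(w))$. Such a curve is automatically contracted by $f$ and, being irreducible and passing through $w\in W$, lies inside $W$; as $w$ varies these curves sweep out $W$, so the Miyaoka--Mori criterion yields that $W$ is uniruled. The engine producing the rational curve is Mori's bend-and-break, which I would run after reducing modulo $p$: I spread $X$, $Y$, $f$ and a fixed polarization out over a finitely generated subring $R\subset\mathbb C$ and work over $\Fp$, the algebraic closure of the residue field at a closed point of $\Spec R$ of large characteristic.

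Over $\Fp$ I would argue as follows. Since a general point of $F$ lies in no other fibre, any irreducible curve that is contracted by $f$ and meets $F$ is contained in $F$; in particular every deformation of a fixed contracted curve $C\subset F$ that still passes through the fixed general point $w$ remains inside the proper variety $F$, so the relevant family has bounded image. Choosing $C$ to be a general complete intersection of members of a very ample system on $F$ through $w$, the standard estimate gives $\dim_{[h]}\Hom(C,X;\,p\mapsto w)\ge -K_X\cdot C-(\dim X)\,\mathfrak{g}(C)$. To overcome the genus term I precompose with the $p^k$-power Frobenius, which multiplies $-K_X\cdot C$ by $p^k$ while leaving $\mathfrak{g}(C)$ unchanged; for $k\gg 0$ the right-hand side becomes positive, so the maps through $w$ move in a positive-dimensional family with image confined to $F$. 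Bend-and-break then degenerates this family and splits off a rational curve through $w$, of $(-K_X)$-degree bounded by a constant depending only on $\dim X$.

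The main obstacle is that this amplification is of use only when $-K_X\cdot C>0$ strictly for a suitable contracted $C\subset F$ through $w$; since $C$ is a complete intersection of an ample class $A$, this amounts to $-K_X|_F$ being nef --- which is immediate from the $f$-nefness of $-K_X$ --- and \emph{not} numerically trivial, for then $(-K_X|_F)\cdot A^{\dim F-1}>0$. Establishing this strict positivity is exactly the point where the excess of the fibre dimension, and not merely the $f$-nefness, must enter. I would extract it from adjunction on $F$ together with the rank drop of the differential: on the smooth locus of the general big fibre one has $K_F\equiv K_X|_F+\det N_{F/X}$, while the inequality $\dim F>\dim X-\dim Y$ forces $df\colon T_X\to f^*T_Y$ to drop rank along $W$, so that $N_{F/X}$ is a subsheaf of a trivial bundle with cokernel of positive rank and $\det N_{F/X}$ has strictly negative degree on appropriate curves. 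Combined with $K_X\cdot C\le 0$ this makes $K_F$ fail to be pseudoeffective and forces a curve on which $-K_X$ is strictly positive. Pinning down this positivity --- in effect a degeneracy-locus positivity statement for the virtual bundle $f^*T_Y-T_X$, whose first Chern class restricts to $K_X|_F$ on the fibre --- is the technical heart and the step I expect to be the most delicate.

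Finally I would return to characteristic zero. The rational curves produced over $\Fp$ have $(-K_X)$-degree bounded independently of $p$, so they are parametrized by a scheme of finite type over $R$; non-emptiness of its fibres over infinitely many closed points of $\Spec R$ forces non-emptiness of its generic fibre, yielding a rational curve through the general point of $W$ over $\mathbb C$ contained in a fibre of $f$. As the point varies these cover $W$, and the Miyaoka--Mori criterion gives that every irreducible component of $\Exc(f)$ is uniruled, as claimed.
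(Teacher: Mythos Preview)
The paper does not contain a proof of this statement; Theorem~\ref{thm:kaw} is quoted from \cite{Kaw91} and used as a black box throughout, so there is no ``paper's own proof'' to compare your proposal against.

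For what it is worth, your outline follows the standard bend-and-break framework and you have correctly located the crux: one must find a contracted curve $C$ through a general point of the big fibre with $-K_X\cdot C>0$ \emph{strictly}, since $f$-nefness alone gives only $-K_X\cdot C\ge 0$, and with equality the Frobenius amplification buys nothing. Your proposed mechanism for strict positivity, however, is not complete. The fibre $F$ is in general singular and possibly non-reduced, so adjunction and the normal-sheaf sequence require justification you have not supplied. More seriously, even on the smooth locus, the fact that $N_{F/X}$ injects into a trivial bundle of larger rank does not by itself force $\det N_{F/X}$ to have negative degree on curves: a subbundle of a trivial bundle can perfectly well have trivial determinant. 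So the sentence ``$\det N_{F/X}$ has strictly negative degree on appropriate curves'' is an assertion, not an argument, and it is exactly the point where the proof lives. Kawamata's route in \cite{Kaw91} is different: rather than adjunction on the fibre, he compares a deformation-theoretic lower bound on the dimension of the Hilbert scheme of the fibre inside $X$ (which involves $-K_X$) against the upper bound forced by the fibre having to remain over a single point of $Y$; the mismatch when the fibre dimension jumps is what produces the needed positivity.
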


Notice that, if the exceptional set $\operatorname{Exc}(\phi)$ is not empty, then we obtain at once infinitely many rational curves. Moreover, as it follows from the original proof of \cite[Theorem 2]{Kaw91}, the rational ruling curves found are contracted by $f$ (this latter property will be useful later, during the proof of Corollary \ref{cor:multsect}).

Next, we look at the proper subvariety $Z\subset B$ consisting of all the singular points of $B$ and all the singular values of $\phi$. Let $B^0$ be the complement of $Z$ in $B$ and let $X^0$ be its preimage $\phi^{-1}(B^0)$. Thus, the restriction $\phi^0=\phi|_{X^0}\colon X^0\to B^0$ is a proper surjective submersion. 

\begin{lemma}
	The subvariety $Z$ has at least one irreducible component of codimension one in $B$.
\end{lemma}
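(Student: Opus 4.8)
The plan is to argue by contradiction, assuming that $Z$ has codimension at least two in $B$. Recall that we may assume that all fibres of $\phi$ are one-dimensional (Theorem~\ref{thm:kaw} applied to the $\phi$-nef divisor $-K_X=\phi^*(-L)$); in particular $\phi$ is equidimensional, so $\phi^{-1}(Z)$ has dimension at most $\dim Z+1\le\dim X-2$, hence codimension at least two in the smooth variety $X$, and therefore $\pi_1(X^0)\cong\pi_1(X)=1$. Since $\phi^0\colon X^0\to B^0$ is a proper submersion with connected fibres, the map $\pi_1(X^0)\to\pi_1(B^0)$ is surjective, so $\pi_1(B^0)=1$ as well; in particular the monodromy of $\phi^0$ on the cohomology of its fibres is trivial and $\pi_1^{\text{\'et}}(B^0)=\widehat{\pi_1(B^0)}=1$. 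Moreover, the $j$-invariant of the smooth genus-one fibration $\phi^0$ is a regular function $j\colon B^0\to\mathbb{A}^1$, and $H^0(B^0,\mathcal{O}_{B^0})=H^0(B,\mathcal{O}_B)=\mathbb{C}$ since $B$ is normal, connected and projective while $Z$ has codimension at least two; so $j$ is constant and $\phi^0$ is isotrivial.

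Next I would pass to the generic fibre. Let $K=\mathbb{C}(B)$ and let $X_\eta$ be the generic fibre of $\phi$, a smooth genus-one curve over $K$. Isotriviality and triviality of the monodromy of $\phi^0$ imply that its Jacobian $\operatorname{Jac}(X_\eta)$ is the \emph{constant} elliptic curve $E_{0,K}$ obtained by base change from a fixed elliptic curve $E_0$ over $\mathbb{C}$: indeed the relative Jacobian $\operatorname{Jac}(X^0/B^0)\to B^0$ is an elliptic scheme with constant $j$-invariant carrying the zero section, so its only possible twisting is by $\operatorname{Aut}(E_0,0)$, and this is killed by $\pi_1^{\text{\'et}}(B^0)=1$. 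Since $\phi$ is projective, the restriction to $X_\eta$ of a hyperplane section of $X$ is a $K$-rational line bundle $\mathcal{L}$ of some positive degree $\delta$, and then $x\mapsto\mathcal{O}_{X_\eta}(\delta\,x)\otimes\mathcal{L}^{-1}$ is a non-constant $K$-morphism $X_\eta\to\Pic^0(X_\eta)=\operatorname{Jac}(X_\eta)=E_{0,K}$. Spreading this out over $B$ gives a dominant rational map $f\colon X\dashrightarrow E_0$. As a rational map from a smooth variety to a projective variety, $f$ is a morphism away from a closed subset of $X$ of codimension at least two; pulling back a nonzero holomorphic $1$-form on $E_0$ and extending it across the indeterminacy locus by Hartogs' theorem produces a nonzero holomorphic $1$-form on $X$. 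Since $X$ is projective, hence Kähler, this gives $b_1(X)=2\,h^{1,0}(X)>0$, contradicting $\pi_1(X)=1$. Therefore $Z$ must have an irreducible component of codimension one.

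The step I expect to be the main obstacle is the identification $\operatorname{Jac}(X_\eta)\cong E_{0,K}$ over $K$, and, through it, the construction of the dominant rational map $X\dashrightarrow E_0$: one must genuinely rule out every nontrivial twist of $E_0$ over $K$ and be careful that the map from $X_\eta$ to its Jacobian — built from a $K$-rational divisor class of positive, but in general $>1$, degree — is defined over $K$ rather than merely over $\overline{K}$. The remaining steps are formal once one has $\pi_1(B^0)=1$ (triviality of the monodromy and of $\pi_1^{\text{\'et}}(B^0)$) and $H^0(B^0,\mathcal{O}_{B^0})=\mathbb{C}$ (isotriviality).
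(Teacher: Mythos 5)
Your argument is correct, but it takes a genuinely different route from the paper's. The paper, after the same reduction ($\operatorname{codim}(X\setminus X^0)\ge 2$, hence $\pi_1(X^0)\simeq\pi_1(X)=1$ and, via Ehresmann, $\pi_1(B^0)=1$), concludes in two lines: by Deligne's theorem the Leray spectral sequence of $\phi^0$ degenerates at $E_2$, and since $R^1\phi^0_*\underline{\mathbb Q}$ is the constant sheaf $\mathbb Q^2$ on the simply connected $B^0$, the graded piece $H^0(B^0,\mathbb Q^2)\simeq\mathbb Q^2$ contradicts $H^1(X^0,\mathbb Q)=0$. No isotriviality is needed there. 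Your use of the $j$-invariant and of $H^0(B^0,\mathcal O_{B^0})=\mathbb C$ coincides with the paper's alternative argument (given as a Remark), which however finishes differently: by Fischer--Grauert the family is locally holomorphically trivial, and \cite[Lemma 17]{KL09} upgrades this over the simply connected $B^0$ to $X^0\simeq E\times B^0$, which is not simply connected. You instead pass to the relative Jacobian, kill its twist by $\pi_1^{\text{\'et}}(B^0)=1$, build the degree-$\delta$ Abel--Jacobi map over $K=\mathbb C(B)$, and extract a holomorphic $1$-form on $X$, contradicting $b_1(X)=0$. This works, and it has the merit of handling the absence of a section of $\phi$ explicitly and of producing extra structure (a dominant map $X\dashrightarrow E_0$); the cost is the one step you rightly flag, namely that an elliptic scheme over $B^0$ with constant $j$-invariant is \emph{\'etale-locally} isomorphic to $E_0\times B^0$, so that its twists are classified by $H^1_{\text{\'et}}(B^0,\operatorname{Aut}(E_0,0))=\operatorname{Hom}(\pi_1^{\text{\'et}}(B^0),\operatorname{Aut}(E_0,0))$ rather than by Galois cohomology of $K$ (which would \emph{not} vanish). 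That step is standard but delicate, and it is exactly the kind of work the spectral-sequence argument lets one avoid.
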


\begin{proof}
	Suppose the contrary. Then by equidimensionality of the fibers, the complement of $X^0$ in $X$ has codimension at least two. In particular, $\pi_1(X^0)\simeq\pi_1(X)=\{1\}$. Since $\phi^0$ is a proper holomorphic surjective submersion with connected fibers, by Eheresmann's theorem it is a differentiable fiber bundle and thus a Serre fibration. In particular, the long exact sequence for the homotopy groups of the fibration tells us that $B^0$ is simply connected. Now, by \cite{Del68}, in this situation the Leray spectral sequence
	$$
	E^{p,q}_2=H^p(B^0,R^q\phi^0_*\underline{\mathbb Q})\Rightarrow H^{p+q}(X^0,\mathbb Q)
	$$  
	degenerates at $E_2$. Since $B^0$ is simply connected, the locally constant sheaves $R^q\phi^0_*\underline{\mathbb Q}$ are indeed constant, isomorphic to $H^q(E,\mathbb Q)$ where $E$ is (the diffeomorphism class of) a fiber, \textsl{i.e.} a one dimensional complex torus. Now, take $p+q=1$ to get that the graded module associated to some filtration of $H^1(X^0,\mathbb Q)\simeq\{0\}$ has the non zero factor $H^0(B^0,\mathbb Q^2)\simeq \mathbb Q^2$. This is a contradiction.
\end{proof}

\begin{remark}
	Using a result of \cite{KL09} it is possible to conclude as well in a stronger form, as follows. Consider the holomorphic function $J\colon B^0\to\mathbb C$ given by the $j$-invariant of the (elliptic) fibers. Since we suppose that $B^0$ has at least codimension two complement in $B$ and $B$ is normal, $J$ extends to a holomorphic function $B\to\mathbb C$, which must be constant. Thus, all fibers over $B^0$ are isomorphic and by the Grauert--Fischer theorem the family $\phi^0\colon X^0\to B^0$ is locally holomorphically trivial. We are thus in position to apply \cite[Lemma 17]{KL09} which gives, since $B^0$ is moreover simply connected, that $\phi^0\colon X^0\to B^0$ is globally holomorphically trivial. In particular $X^0\simeq E\times B^0$ and thus $X^0$ cannot be simply connected.
\end{remark}

Following \cite{Ogu93}, we shall reduce our situation to the surface case by picking a general curve in $B$ and use Kodaira's canonical bundle formula to study singular fibers. So, let $n = \dim X$ and fix a very ample line bundle $H$ on $B$, positive enough in order to ensure that $H^{\otimes(n-2)}\otimes\mathcal O_B(L)$ is generated by global sections. Observe that, since $\phi\colon X\to B$ is an algebraic fiber space, then $H^0(X,\phi^*H)\simeq H^0(B,\phi_*(\phi^*H))\simeq H^0(B,H\otimes\phi_*\mathcal O_X)\simeq H^0(B,H)$. In particular, general elements in the linear system $|H|$ are also general members of $|\phi^*H|$. Now, take a curve $C\subset B$ which is a general complete intersection of divisors in $|H|$, and the surface $S\subset X$ cut out by the pull-back of such divisors to $X$. By Bertini's theorem, $C$ is normal, hence smooth, and $S$ is smooth, too. Let us still call, by abuse of notation, $\phi\colon S\to C$ the restriction of $\phi$ to $S$. The surface $S$ is an elliptic surface, which we can suppose to be relatively minimal (otherwise we would have found a rational curve on $S$ and hence on $X$).

Since $C$ is general, and the singular locus $B_{\textrm{sing}}$ of $B$ is of codimension two, we can suppose that $C\cap B_{\text{sing}}=\emptyset$. Next, pick a divisorial irreducible component $Z_0$ of $Z$, which always exists thanks to the above lemma. Then $C$ must necessarily intersect $Z_0$, since $C\cdot Z_0=H^{n-2}\cdot Z_0>0$. This means that $\phi\colon S\to C$ must always have at least one singular fiber. Our goal is now to show that such a fiber can never be a multiple fiber and that a singular (non multiple) fiber must necessarily contain an irreducible component which is rational.

Let us start with the following.

\begin{lemma}
	The canonical bundle $K_S$ of $S$ is globally generated.
\end{lemma}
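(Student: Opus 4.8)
The plan is to compute $K_S$ explicitly by adjunction and then read off global generation directly from the hypothesis imposed on $H$. Recall from the construction that (after the Kawamata reduction, so that $\dim B=n-1$) one has $S=\phi^{-1}(C)$, where $C=H_1\cap\cdots\cap H_{n-2}$ is the general complete intersection of members $H_i\in|H|$; equivalently $S=D_1\cap\cdots\cap D_{n-2}$ with $D_i=\phi^*H_i\in|\phi^*H|$ general, the two linear systems being identified as in the discussion preceding the lemma. By the Bertini-type choices already made, $S$ and all the intermediate partial intersections inside $X$ are smooth of the expected dimension (the linear system $|\phi^*H|$ is base-point free, being the pullback of a very ample one), so the iterated adjunction formula applies along the flag cutting out $S$.

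First I would apply adjunction $n-2$ times to get
\[
K_S\simeq\Bigl(K_X\otimes\mathcal O_X(D_1+\cdots+D_{n-2})\Bigr)\Big|_S\simeq\Bigl(K_X\otimes(\phi^*H)^{\otimes(n-2)}\Bigr)\Big|_S,
\]
and then substitute $K_X\simeq\phi^*L$ to rewrite this as
\[
K_S\simeq\phi^*\bigl(\mathcal O_B(L)\otimes H^{\otimes(n-2)}\bigr)\Big|_S .
\]

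Next I would invoke the choice of $H$: it was taken positive enough that $H^{\otimes(n-2)}\otimes\mathcal O_B(L)$ is generated by global sections on $B$. Since the pullback of a globally generated line bundle under an arbitrary morphism is again globally generated, $\phi^*\bigl(H^{\otimes(n-2)}\otimes\mathcal O_B(L)\bigr)$ is globally generated on $X$; and the restriction of a globally generated line bundle to any subvariety remains globally generated. Hence the right-hand side above, i.e.\ $K_S$, is globally generated, which is what we want.

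I do not expect a genuine obstacle here: the argument is essentially a bookkeeping of adjunction plus the defining property of $H$. The only points requiring a little care are that $S$ is an honest smooth complete intersection of divisors in $|\phi^*H|$, so that the iterated adjunction is legitimate — this is precisely what the general-position/Bertini choices guarantee — and that $\mathcal O_X(D_i)\simeq\phi^*H$, which holds because the $D_i$ were chosen as pullbacks of members of $|H|$.
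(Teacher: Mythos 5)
Your proof is correct and follows essentially the same route as the paper: iterate adjunction to get $K_S\simeq\phi^*\bigl(\mathcal O_B(L)\otimes H^{\otimes(n-2)}\bigr)|_S$ and conclude from the fact that restrictions of pull-backs of globally generated line bundles are globally generated, which is exactly how $H$ was chosen. The extra care you take about Bertini and the smoothness of the intermediate intersections is implicit in the paper's setup, so there is nothing to add.
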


\begin{proof}
	Let $H_1,\dots,H_{n-2}\in|\phi^* H|$ be the smooth divisors in general position which cut out $S$. Then by iterating the adjunction formula we find
	\begin{equation}\label{eq:canS}
	K_S\simeq\bigl(K_X\otimes\phi^*H^{\otimes(n-2)}\bigr)|_{S}\simeq \phi^*\bigl(\mathcal O_B(L)\otimes H^{\otimes(n-2)}\bigr)|_S.
	\end{equation}
	By our choice of $H$, the canonical bundle $K_S$ is the restriction of a pull-back of a globally generated line bundle, hence it is globally generated too.
\end{proof}

Now, recall the (weaker form of the) canonical bundle formula for relatively minimal elliptic fibrations such that its multiple fibers are $S_{c_1}=m_1F_1,\dots,S_{c_k}=m_kF_k$, which reads (see \cite[Corollary V.12.3]{BHPV04}):
\begin{equation}\label{eq:cbf}
K_S\simeq \phi^*G\otimes\mathcal O_S\biggl(\sum_{i=1}^k(m_i-1)\,F_i\biggr),
\end{equation}
where $G$ is some line bundle living over $C$.

\begin{proposition}
	The elliptic surface $S$ does not have any multiple fiber.
\end{proposition}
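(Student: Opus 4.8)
The plan is to confront the canonical bundle formula \eqref{eq:cbf} with the identity \eqref{eq:canS} obtained in the previous lemma --- which exhibits $K_S$ as the pull-back of a line bundle from the base curve $C$ --- and then to restrict everything to a hypothetical multiple fibre.

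So, suppose for contradiction that $\phi\colon S\to C$ admits a multiple fibre, say $S_{c_1}=m_1F_1$ with $m_1\ge 2$. I would first use \eqref{eq:canS} to write $K_S\simeq\phi^*G'$ for the line bundle $G'=\bigl(\mathcal O_B(L)\otimes H^{\otimes(n-2)}\bigr)|_C$ on $C$ (note that $\phi$ maps $S$ onto $C$, so the restriction to $S$ of a pull-back from $B$ is the pull-back of a line bundle from $C$), and then rearrange \eqref{eq:cbf} to obtain
\[
\mathcal O_S\Bigl(\textstyle\sum_i(m_i-1)F_i\Bigr)\simeq\phi^*N
\]
for a single line bundle $N\in\Pic(C)$. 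Now I would restrict both sides to $F_1$. Since the $F_i$ are supported on pairwise distinct fibres of $\phi$, we have $F_i\cap F_1=\emptyset$ for $i\ne 1$, so the left-hand side restricts to $\mathcal O_{F_1}(F_1)^{\otimes(m_1-1)}$, where $\mathcal O_{F_1}(F_1)\coloneq\mathcal O_S(F_1)|_{F_1}$ denotes the normal bundle of $F_1$ in $S$; and since $\phi$ contracts $F_1$ to the point $c_1$, the right-hand side restricts to the trivial line bundle on $F_1$. Hence $\mathcal O_{F_1}(F_1)^{\otimes(m_1-1)}\simeq\mathcal O_{F_1}$.

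To conclude I would invoke the classical fact that, on a relatively minimal elliptic surface, the normal bundle $\mathcal O_{F_1}(F_1)$ of the reduction of a multiple fibre is a torsion element of $\Pic(F_1)$ whose order is \emph{exactly} the multiplicity $m_1$ (see \cite[Chapter V]{BHPV04}). Then the isomorphism $\mathcal O_{F_1}(F_1)^{\otimes(m_1-1)}\simeq\mathcal O_{F_1}$ forces $m_1\mid(m_1-1)$, which is impossible for $m_1\ge 2$. This contradiction shows that $S$ has no multiple fibre.

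The one step that is not pure bookkeeping --- and hence the crux --- is the cited statement that the order of the normal bundle of the reduced multiple fibre equals its multiplicity: the inequality ``order divides $m_1$'' is immediate from $m_1F_1\sim\phi^*c_1$, but the reverse inequality genuinely uses relative minimality. Everything else (identifying the two expressions for $K_S$, the disjointness of distinct multiple fibres, and the triviality of the restriction of $\phi^*N$ along the contracted curve $F_1$) is routine.
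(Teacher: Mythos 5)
Your argument is correct and is essentially the paper's own: both proofs play the canonical bundle formula \eqref{eq:cbf} off against the expression \eqref{eq:canS} of $K_S$ as (the restriction of) a pull-back from the base, and conclude via \cite[Lemma III.8.3]{BHPV04} applied to the reduced multiple fibre. The only cosmetic differences are that the paper derives the contradiction from ``$K_S|_{F_i}$ is globally generated yet $\mathcal O_{F_i}\bigl((m_i-1)F_i\bigr)$ has no sections'' rather than from triviality versus exact torsion order, and that the exactness of the order $m_1$ is a general fact about fibrations of surfaces over curves --- relative minimality is what is needed for the validity of \eqref{eq:cbf} itself, not for that step.
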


\begin{proof}
	Indeed, on the one hand the restriction to $K_S$ to any subscheme of $S$ is globally generated, since $K_S$ is globally generated itself. On the other hand, the canonical bundle formula (\ref{eq:cbf}) together with \cite[Lemma III.8.3]{BHPV04} tell us that the restriction $\mathcal O_{F_i}\bigl((m_i-1)F_i\bigr)$ of $K_S$ to $F_{i}$ would have no sections, since $\mathcal O_{F_i}(F_i)$ is torsion of order $m_i$.
\end{proof}

To conclude the proof, we now have to examine singular but not multiple fibers, following Kodaira's table \cite[Section V.7]{BHPV04}. So, let $F$ be such a fiber.

\begin{itemize}
	\item[(i)] If $F$ is irreducible, then it is necessarily rational with a node, or rational with a cusp. In both cases we find a (singular) rational curve on $S$, hence on $X$.
	\item[(ii)] If $F$ is reducible but not multiple, then it is of the form $F=\sum m_iF_i$, with $F_i$ irreducible and reduced, and $\gcd\{m_i\}=1$. In this case, each irreducible component is a smooth rational $(-2)$-curve.
\end{itemize}

\section{Proof of Theorem \ref{thm:applic}}
In this section, after collecting all the essential ingredients, we prove Theorem \ref{thm:applic}, namely, the existence of rational curves on Calabi--Yau manifolds admitting a fibration onto 
a curve whose general fibers are abelian varieties. 
Two standard tools to produce rational curves on a smooth projective variety are the uniruledness of the exceptional loci (see Kawamata's Theorem \ref{thm:kaw} above) and the logarithmic version of the Cone Theorem. We combine them in the following lemma (vaguely inspired by the key lemma in \cite{Wil89}).

\begin{lemma} \label{lem:key}
	Let $X$ be a Calabi--Yau manifold of dimension $n$ such that there exists $D$ a non-ample divisor on $X$ satisfying $D^n > 0$. Assume that $h^i(X, mD) = 0$ for $i > 1$ and for $m$ large enough. Then $X$ does contain a rational curve.
\end{lemma}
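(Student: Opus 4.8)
The plan is to run the logarithmic Cone Theorem on a pair $(X,\varepsilon D')$ for a suitable effective divisor $D'$ numerically proportional to $D$, and then to produce the rational curve either from a $K_X+\varepsilon D'$-negative extremal ray or from the exceptional locus of the associated contraction via Kawamata's Theorem \ref{thm:kaw}. First I would use the hypotheses $D^n>0$ and $h^i(X,mD)=0$ for $i>1$, $m\gg 0$: by Riemann--Roch (asymptotically, $\chi(X,mD)=\frac{D^n}{n!}m^n+O(m^{n-1})$, since $K_X\simeq\mathcal O_X$ the Todd-class contributions are the usual ones but the leading term is governed by $D^n>0$) together with the vanishing of the higher cohomology, we get $h^0(X,mD)\geq h^0(X,mD)-h^1(X,mD)=\chi(X,mD)\to+\infty$, so $D$ is big; after scaling we may replace $D$ by a large multiple and assume $D$ is linearly equivalent to an effective divisor $D'$ which is big but, since $D$ was non-ample, still non-ample. (The point of keeping track of the higher vanishing is precisely to turn $D^n>0$ into effectivity, i.e. to avoid the a priori possibility that $D$ is big only "virtually".)

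Next I would choose $0<\varepsilon\ll 1$ so that $(X,\varepsilon D')$ is klt — this is possible because $X$ is smooth and we can further take $D'$ general in its (big) linear system, or simply pass to a log resolution; since $D'$ is not nef there exists an irreducible curve $R$ with $D'\cdot R<0$, hence $(K_X+\varepsilon D')\cdot R=\varepsilon D'\cdot R<0$ using $K_X\simeq\mathcal O_X$. Thus $K_X+\varepsilon D'$ is not nef, and by the logarithmic Cone Theorem (see \cite[Theorem 7-2-2]{Mat92}) the closed cone $\NE(X)$ has a $(K_X+\varepsilon D')$-negative extremal ray $R_0$ which is moreover spanned by the class of a rational curve $\Gamma$ with $0<-(K_X+\varepsilon D')\cdot\Gamma=-\varepsilon D'\cdot\Gamma\leq 2\dim X$. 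This already exhibits a rational curve on $X$, which is what we want; alternatively, to keep the argument parallel to the rest of the paper, one contracts $R_0$ by $\operatorname{cont}_{R_0}\colon X\to Y$ and applies Kawamata's Theorem \ref{thm:kaw} (valid since $-K_X\simeq\mathcal O_X$ is relatively nef for this contraction) to conclude that every component of the exceptional locus — which is nonempty as $R_0$ is $(K_X+\varepsilon D')$-negative and $\varepsilon$ is small, so the contraction is not an isomorphism — is uniruled, hence contains a rational curve.

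The main obstacle I anticipate is the first step: rigorously deducing that $D$ (or a multiple) is \emph{effective} and not merely big in the numerical sense. The asymptotic Riemann--Roch estimate gives $h^0-h^1=\chi\sim\frac{D^n}{n!}m^n$, so one genuinely needs the hypothesis $h^i(X,mD)=0$ for $i\geq 2$ to control $\chi$ in terms of $h^0$ and $h^1$ only, and then to absorb $h^1$ — here one should argue that $h^0(X,mD)>0$ for some $m$ because $\chi(X,mD)>0$ for $m\gg 0$ forces either $h^0>0$ (done) or an unboundedly growing $h^1$, the latter being excluded for a big class by, e.g., the fact that $h^1(X,mD)=O(m^{n-1})$ while we would need it to be $\sim cm^n$. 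Once effectivity and non-ampleness of $D'$ are in hand, the remaining steps (klt-ness after small scaling, existence of a curve with negative intersection, log Cone Theorem, Kawamata) are standard, and the triviality of $K_X$ is exactly what makes "$K_X+\varepsilon D'$ not nef" equivalent to "$D'$ not nef", which is where the Calabi--Yau hypothesis is used.
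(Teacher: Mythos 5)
Your second half of the argument (effectivity of $mD$ via Riemann--Roch plus the assumed vanishing, then the logarithmic Cone Theorem applied to a klt pair $(X,\varepsilon D')$ with $K_X+\varepsilon D'$ non-nef) matches the paper's proof, and the obstacle you flag about turning $D^n>0$ into actual sections is resolved exactly as you suggest: $h^0\ge h^0-h^1=\chi\sim m^nD^n/n!>0$, so no further control on $h^1$ is needed. However, there is a genuine gap where you pass from \lq\lq$D'$ is non-ample\rq\rq{} to \lq\lq$D'$ is not nef\rq\rq: these are not equivalent, and the case of a divisor that is nef and big but not ample (e.g.\ the pullback of an ample divisor under a nontrivial birational contraction) genuinely occurs and is not covered by your argument. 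In that case there is no curve $R$ with $D'\cdot R<0$, hence no $(K_X+\varepsilon D')$-negative extremal ray, and the Cone Theorem yields nothing.

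The paper disposes of this case separately, at the very start: if $D$ is nef, then $D^n>0$ makes $D$ big, hence (since $K_X\simeq\mathcal O_X$, so $aD-K_X$ is nef and big) semi-ample by the Basepoint-free Theorem; the resulting generically $1$--$1$ morphism $g\colon X\to Y$ has non-empty exceptional locus precisely because $D$ is not ample, and Kawamata's Theorem \ref{thm:kaw} then gives that $\Exc(g)$ is uniruled. You already have all the ingredients for this step, since you invoke Kawamata's theorem for the contraction of the extremal ray in the other case; the fix is simply to split into the two cases \lq\lq$D$ nef\rq\rq{} and \lq\lq$D$ not nef\rq\rq{} at the outset and run the basepoint-free/Kawamata argument in the former.
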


\begin{proof}
	We first observe that we may assume $D$ non-nef: otherwise $D$ nef and $D^n > 0$  implies that $D$ is big (see for instance \cite[Theorem 2.2.16]{Laz04}). Then $D$ is semiample by the 
	Basepoint-free Theorem (see for instance \cite[Theorem 3.3]{KM98}) and there exists a multiple of $D$ that defines a surjective generically 1-1 morphism $g \colon X \to Y$. Since $D$ is non-ample, the exceptional locus of $g$ is non-empty, so we can conclude thanks to Theorem \ref{thm:kaw} that $\Exc(g)$ is uniruled.

	Now we prove that it is possible to choose $m > 0$ such that $mD$ is effective.
	Indeed, by the Hirzebruch--Riemann--Roch Theorem, 
	we have
	\begin{equation*}
	\chi(\mcO_X(mD)) = \deg{(\ch(mD)\cdot\td(\mathcal{T}_X))}_n.
	\end{equation*}
	Since $\ch(mD) = \sum_{k = 0}^\infty \frac{m^kD^k}{k!}$ because $mD$ is a line bundle, we obtain for $m$ sufficiently large
	\begin{equation*}
	\chi(\mcO_X(mD)) \sim \frac{m^nD^n}{n!} > 0.
	\end{equation*}
	By assumption,
	\begin{equation*}
	\chi(\mcO_X(mD)) = h^0(\mcO_X(mD))  - h^1(\mcO_X(mD)) \leq h^0(\mcO_X(mD)). 
	\end{equation*}
	Then $h^0(\mcO_X(mD)) > 0$ and $mD$ is effective. Since $mD$ is not nef and the pair $(X, \varepsilon mD)$ is klt for $\varepsilon$ sufficiently small, then we can conclude thanks to the logarithmic version of the Cone Theorem (see for instance \cite[Theorem 3.7]{KM98}).
	\end{proof}

We are going to apply Lemma \ref{lem:key} to $D_{a,b} \coloneq aH - bF$, where $H$ is an ample divisor on $X$ and $F$ is the generic fiber of a fibration of $X$ onto a curve. 
The point is to obtain an asymptotic vanishing of the higher cohomology of $D$ for some fixed values of $a$, $b$. Indeed, we are able to prove a slightly stronger statement, where we obtain a uniform vanishing for every $b$. 

\begin{lemma} \label{lem:van}
	Let $X$ be a projective manifold that admits a fibration onto a curve, let $H$ be an ample divisor on $X$ and let $F$ be the generic fiber of the fibration. Let $m_0 \in \mbN$ such that 
	\begin{itemize}
		\item $h^i(X, mH) = 0$, $i > 0$,
		\item $h^i(F, mH) = 0$, $i > 0$,
	\end{itemize}
	for all $m \geq m_0$. Then  $h^i(X, mH - kF) = 0$, $i > 1$, for all $m \geq m_0$ and for all $k \in \mbN$.
	
	In particular, for any positive integer $a$, $b$, the divisor $D_{a,b} = aH - bF$ satisfies the assumption of Lemma \ref{lem:key}. 
\end{lemma}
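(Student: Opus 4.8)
The plan is to establish, more precisely, the vanishing
\[
h^i(X,\, mH - kF)=0\qquad\text{for all }i>1,\ m\ge m_0,\ k\in\mbN,
\]
by induction on $k$. Write $\phi\colon X\to C$ for the given fibration onto the curve $C$ and take $F=\phi^{-1}(t)$ for a general closed point $t$; then $t$ lies in the smooth locus of $C$, the fiber $F$ is smooth by generic smoothness, $\phi$ is smooth along $F$, so $F$ is a reduced Cartier divisor whose normal bundle $\mathcal{O}_X(F)|_F=N_{F/X}$ is trivial, being the pullback via $\phi$ of the one-dimensional tangent space of $C$ at $t$. Consequently $\mathcal{O}_X(jF)|_F\simeq\mathcal{O}_F$ for every $j\in\mbZ$. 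The case $k=0$ of the claim is contained in the first hypothesis.

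For the inductive step I would twist the structure sequence $0\to\mathcal{O}_X(-F)\to\mathcal{O}_X\to\mathcal{O}_F\to 0$ of the Cartier divisor $F$ by the line bundle $\mathcal{O}_X(mH-(k-1)F)$, obtaining
\[
0\to\mathcal{O}_X(mH-kF)\to\mathcal{O}_X(mH-(k-1)F)\to\mathcal{O}_F(mH-(k-1)F)\to 0 .
\]
By the computation of the normal bundle, $\mathcal{O}_F(mH-(k-1)F)\simeq\mathcal{O}_X(mH)|_F$, and its cohomology in positive degrees vanishes for $m\ge m_0$ by the second hypothesis. Passing to the associated long exact sequence, for $i\ge 2$ the term $H^{i-1}(F,\mathcal{O}_X(mH)|_F)$ vanishes (since $i-1\ge 1$) and $H^{i}(X, mH-(k-1)F)$ vanishes by the inductive hypothesis, hence $H^i(X, mH-kF)=0$. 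This closes the induction; note that it is carried out uniformly in $m\ge m_0$, since at each step only the cohomology at the given value of $m$ is used.

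For the final assertion, given positive integers $a,b$ put $D=D_{a,b}=aH-bF$; then $mD=(ma)H-(mb)F$, so for every $m$ with $ma\ge m_0$ the vanishing just proved yields $h^i(X, mD)=0$ for $i>1$, which is precisely the cohomological hypothesis of Lemma~\ref{lem:key}. The one point requiring care is the isomorphism $\mathcal{O}_X(F)|_F\simeq\mathcal{O}_F$: it is exactly what makes it essential to work with a genuine general fiber of a fibration onto a curve, whose normal bundle is trivial, rather than with a member of a general linear system, and it is also the reason the hypothesis involves the fibers $F$ rather than arbitrary divisors. Beyond this the proof is a straightforward manipulation of the exact sequences above, and I expect no real obstacle.
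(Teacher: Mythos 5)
Your proof is correct and takes essentially the same approach as the paper's: an induction on $k$ that peels off one copy of $F$ at a time and exploits the triviality of $\mathcal{O}_X(F)|_F$ together with the assumed vanishing on $F$. The only cosmetic difference is that you subtract a reduced copy of $F$ directly on $X$ at each step, whereas the paper first passes to the thickened fiber $kF$ and then filters $\mathcal{O}_{kF}$ via the quotients $\mathcal{I}^j/\mathcal{I}^{j+1}\simeq\mathcal{O}_F(-jF)$; the two inductions are equivalent.
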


	It is always possible to choose $m_0$ satisfying the hypothesis of Lemma \ref{lem:van} thanks to Serre's vanishing.

\begin{proof}
	
	Given the standard exact sequence 
	\begin{equation*}
	0 \to \mcO_X(mH - kF) \to \mcO_X(mH) \to \mcO_{kF}(mH) \to 0,
	\end{equation*}
	it is enough to show that for every $i > 1$ we have
	\begin{equation*}
	h^{i - 1}(kF, \mcO_{kF}(mH)) = 0.
	\end{equation*}
	Indeed, let $D$ be an effective Cartier divisor on $X$ and 
	let $\mathcal{I}$ be the ideal sheaf of $D$ in $X$, i.e. $\mathcal{I} = \mathcal{O}_X(-D)$. 
	Then for any $k \in \mbN$, we have $\mathcal{O}_{(k+1)D} = \mathcal{O}_X/\mathcal{I}^{k+1}$ 
	and $\mathcal{O}_{kD} = \mathcal{O}_X/\mathcal{I}^k$. 
	Let $\mathcal{K}$ be defined by the short exact sequence
	\begin{equation*}
	0 \to \mathcal{K} \to \mcO_{(k + 1)D} \to \mcO_{kD} \to 0.
	\end{equation*}
	There is a natural sheaf isomorphism
	\begin{equation*}
	\mathcal{K} = \mathcal{I}^k/\mathcal{I}^{k+1}= \mathcal{I}^k \otimes_{\mathcal{O}_X} 
           \mathcal{O}_X/\mathcal{I} = \mathcal{O}_X(-kD)\otimes_{\mathcal{O}_X} \mathcal{O}_D =   
           \mathcal{O}_D(-kD).
	\end{equation*}
	Hence we obtain the exact sequence
	\begin{equation*}
	0 \to \mcO_D(-kD) \to \mcO_{(k + 1)D} \to \mcO_{kD} \to 0
	\end{equation*}
	and tensoring by $\mcO_X(mH)$ we get 
	\begin{equation*}
	0 \to \mcO_D(mH -kD) \to \mcO_{(k + 1)D}(mH) \to \mcO_{kD}(mH) \to 0.
	\end{equation*}
	Now, if we set $D =F$, since $F$ is a fiber we have  
	\begin{equation*}
	\mcO_F(mH -kF) = \mcO_F(mH),
	\end{equation*}
	so we can easily conclude by induction on $k$.
	
\end{proof}

\begin{remark} \label{rem:AndreottiGrauert}
	The required weaker vanishing condition of Lemma \ref{lem:key} for the divisor $D_{a,b}$ also follows from the next theorem, that can be seen as a consequence of the classical Andreotti--Grauert finiteness theorem.
\end{remark}
	\begin{theorem}[see {\cite[Chapter VII, Theorem 5.1]{Dem01}}] \label{thm:Demcurv}
		Let $X$ be a compact complex manifold of dimension $n$, let $s$ be a positive integer and $F$ be a hermitian line bundle such that its Chern curvature $i\Theta(F)$ has at least $n-s+1$ positive eigenvalues at every point of $X$. Then there exists a positive integer $l_0$ such that
		\begin{equation*}
		H^q(X, F^{\otimes l}) = 0, \text{ for all } l\ge l_0 \text{ and } q \ge s.
                     \end{equation*}
	\end{theorem}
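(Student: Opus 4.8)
The plan is to reduce the statement to the Andreotti--Grauert finiteness theorem for \(q\)-convex complex spaces by passing to the total space of the dual line bundle, and then to extract the vanishing via Serre's description of the cohomology of such total spaces.

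Write \((F,h)\) for the Hermitian line bundle in the statement and set \(Y:=\mathrm{Tot}(F^{*})\), with its projection \(\pi\colon Y\to X\); it is a complex manifold of dimension \(n+1\) containing the zero section \(Z\cong X\), and I equip it with the smooth, proper exhaustion function \(\rho(\xi):=|\xi|^{2}_{h^{*}}\) (its sublevel sets are the closed disc bundles of \(F^{*}\)). The first step is to compute the Levi form of \(\rho\). In a local holomorphic frame of \(F\) of weight \(\varphi\), so that \(i\Theta_{h}(F)=i\partial\bar\partial\varphi\) in that frame, with induced fibre coordinate \(w\) one has \(\rho=|w|^{2}e^{\varphi}\) and
\[
i\partial\bar\partial\rho \;=\; e^{\varphi}\bigl(\, i\,\theta\wedge\bar\theta \;+\; |w|^{2}\, i\partial\bar\partial\varphi \,\bigr),\qquad \theta:=dw+w\,\partial\varphi .
\]
At a point with \(w\neq 0\) the forms \(\theta,dz_{1},\dots,dz_{n}\) form a coframe, in which this Levi form is block diagonal: one positive eigenvalue from the \(\theta\)-block, and the eigenvalues of \(|w|^{2}\,i\Theta_{h}(F)\) from the horizontal block. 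Since \(i\Theta_{h}(F)\) has at least \(n-s+1\) positive eigenvalues at every point of \(X\), outside the compact set \(Z\) the Levi form of \(\rho\) has at least \((n-s+1)+1=(n+1)-s+1\) positive eigenvalues, i.e. \(\rho\) is strongly \(s\)-convex off a compact set and \(Y\) is an \(s\)-convex manifold.

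Now apply Andreotti--Grauert finiteness: \(\dim_{\mathbb{C}}H^{j}(Y,\mathcal{O}_{Y})<\infty\) for every \(j\ge s\). On the other hand \(\pi\) has Stein fibres, so \(R^{j}\pi_{*}\mathcal{O}_{Y}=0\) for \(j>0\) and Leray gives \(H^{j}(Y,\mathcal{O}_{Y})\cong H^{j}(X,\pi_{*}\mathcal{O}_{Y})\). The fibrewise \(\mathbb{C}^{*}\)-action on \(Y\) makes \(\pi_{*}\mathcal{O}_{Y}\) a graded (completed) sheaf whose weight-\(l\) summand, consisting of the fibrewise functions homogeneous of degree \(l\), is exactly \(F^{\otimes l}\); hence the weight-\(l\) part of \(H^{j}(Y,\mathcal{O}_{Y})\) is \(H^{j}(X,F^{\otimes l})\). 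Since \(X\) is compact each \(H^{j}(X,F^{\otimes l})\) is finite-dimensional, so the direct sum \(\bigoplus_{l\ge 0}H^{j}(X,F^{\otimes l})\) embeds into the finite-dimensional space \(H^{j}(Y,\mathcal{O}_{Y})\); therefore, for each \(j\ge s\), one has \(H^{j}(X,F^{\otimes l})=0\) for \(l\) larger than some \(l_{0}(j)\), and \(l_{0}:=\max_{s\le j\le n}l_{0}(j)\) yields the theorem.

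I expect the delicate point to be the eigenvalue bookkeeping of the first step: handling the mixed base--fibre terms in \(i\partial\bar\partial\rho\) correctly (the substitution \(\theta=dw+w\,\partial\varphi\) is precisely what diagonalizes the form), and matching the normalization relating \lq\lq \(i\Theta(F)\) has at least \(n-s+1\) positive eigenvalues\rq\rq{} to \lq\lq \(\rho\) is strongly \(s\)-convex\rq\rq{} in the exact convention under which Andreotti--Grauert is invoked, as well as checking that the locus where strong \(s\)-convexity can fail is contained in the compact zero section. As sanity checks, \(s=1\) recovers \lq\lq \(F\) positive \(\Rightarrow\) Serre vanishing\rq\rq{} through the \(1\)-convexity of \(\mathrm{Tot}(F^{*})\), while \(s=n+1\) degenerates to the triviality \(H^{j}=0\) for \(j>n=\dim_{\mathbb{C}}X\). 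The rest is formal; note that the softer route through Demailly's holomorphic Morse inequalities would only give \(h^{j}(X,F^{\otimes l})=o(l^{n})\) for \(j\ge s\), which is not by itself enough, so the Andreotti--Grauert input is essential.
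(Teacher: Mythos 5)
The paper gives no proof of this statement---it is quoted from Demailly, and Remark~\ref{rem:AndreottiGrauert} merely observes that it follows from the Andreotti--Grauert finiteness theorem---and your argument is exactly that classical derivation: the computation $i\partial\bar\partial\rho=e^{\varphi}\bigl(i\,\theta\wedge\bar\theta+|w|^{2}\,i\Theta(F)\bigr)$ is correct, it shows the total space of $F^{*}$ is $s$-convex off the compact zero section, and the $S^{1}$-isotypic decomposition of $H^{j}(Y,\mathcal O_{Y})\cong H^{j}(X,\pi_{*}\mathcal O_{Y})$ into the pieces $H^{j}(X,F^{\otimes l})$ then forces all but finitely many of them to vanish for each $j\ge s$. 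The one point worth making explicit is that each weight-$l$ piece is an honest direct summand of $\pi_{*}\mathcal O_{Y}$ (cut out by the $S^{1}$-averaging projector $P_{l}$, with $P_{l}P_{l'}=0$ for $l\ne l'$), so that the algebraic direct sum $\bigoplus_{l}H^{j}(X,F^{\otimes l})$ genuinely injects into the finite-dimensional space $H^{j}(Y,\mathcal O_{Y})$; with that noted, the proof is correct.
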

	
	Indeed, let $\pi\colon X\to C$ be a fibration onto a curve and let $F$ be its generic fiber. Then as a divisor $F$ is the pull-back of a point $p\in C$. Call $A=\mathcal O_C(p)$ the corresponding ample line bundle, so that $\mathcal O_X(F)\simeq\pi^* A$. Put a metric $h_A$ on $A$ whose Chern curvature $i\Theta(A)$ is positive. Now take any ample divisor $H$ on $X$ and take a positively curved metric $h_H$ on $\mathcal O_X(H)$. We claim that for any integers $a, b>0$, the line bundle (associated to the divisor) $D_{a,b}=aH-bF$ has a metric whose Chern curvature has at least $n-1$ positive eigenvalues at every point of $X$. 
	Indeed, the metric $h_{a,b}=h_H^{\otimes a}\otimes\pi^*h_A^{\otimes -b}$ on $\mathcal O_X( D_{a,b})$ is such that
	$$
	\Theta\bigl(\mathcal O_X(D_{a,b})\bigr)=m\Theta\bigl(\mathcal O_X(H)\bigr)-k\pi^*\Theta(A).
	$$
	When one evaluates $i\Theta\bigl(\mathcal O_X(D_{a,b})\bigr)$ (seen as a hermitian form) on a non zero tangent vector $v$ on $X$ one gets then
	$$
	i\Theta\bigl(\mathcal O_X(D_{a,b})\bigr)(v)=m\,i\Theta\bigl(\mathcal O_X(H)\bigr)(v)-k\,i\Theta(A)(d\pi(v)).
	$$
	So, if $d\pi(v)=0$, then $i\Theta\bigl(\mathcal O_X(D_{a,b})\bigr)(v)>0$. It is then enough to observe that at every point $x\in X$, the kernel of $d\pi$ is at least of dimension $n-1$.

	Thus, we can apply Theorem \ref{thm:Demcurv} to deduce that 
	$$
	H^q(X,\mathcal O_X(\ell D_{a,b}))=0,\quad\text{for all $\ell\ge\ell_0$ and $q\ge 2$.}
	$$
	
%

We also need the following remark.
\begin{remark} \label{rmk:abel}
We first observe that $c_2(X)$ is non-zero, otherwise $X$ would be a finite unramified quotient of a torus and thus its fundamental group would contain a free abelian group of rank $2n$, contradicting our assumption of simple connectedness. 

Indeed, by Yau's celebrated solution \cite{Yau78} of the Calabi conjecture, $X$ admits a Ricci-flat K\"ahler metric (unique in each K\"ahler class), since $c_1(X)=0$. Therefore, if $c_2(X)$ were zero, then by the equality case in the Kobayashi--L\"ubke inequality $X$ would be flat, so by the classical theorem of Bieberbach $X$ would be covered by a complex torus (see \cite[Chapter IV, (4.15) Corollary]{Kob87}).

Next, given the fibration $f \colon X \to C$, thanks to the relative tangent sequence on a generic fiber $F$, we get 
%
	\begin{equation*}
	0 \to T_F \to T_{X|_{F}} \to \mcO_F \to 0.
	\end{equation*}
Hence we deduce
\begin{equation*}
c_2(X) \cdot F = c_2(T_{X|_{F}}) = c_2(T_F) = c_2(F).
\end{equation*}
Since $F$ is an abelian variety, we have $c_2(X) \cdot F = c_2(F) = 0$ as a cycle.
\end{remark}

We now give the proof of Theorem \ref{thm:applic}.
Let $F$ be the generic fiber of the fibration and let $H$ be an ample divisor on $X$. We consider the affine line of divisors (with rational slope) $N_t = H - tF$ for $t \in \mbQ$. If we let 
$n = \dim X$ and
\begin{equation*}
t_0 = \frac{H^n}{nH^{n-1}\cdot F} \in \mbQ,
\end{equation*}
then  we have $(N_t)^n > 0$ for each $t < t_0$ and $(N_{t_0})^n=0$.
Now there are two possible cases:
\begin{itemize}
	\item [(I)] $N_t$ is nef for each $t < t_0$.
	\item [(II)] There exists a $\bar{t} < t_0$ such that $N_{\bar{t}}$ is non-nef.
\end{itemize}

Let us focus on case (I) first. Since being nef is a closed condition, $N_{t_0}$ is also nef.
Next, it is easy to verify that $(N_{t_0})^{n-1}\cdot H = (1 - \frac{n-1}{n})H^n > 0$. 
Finally, according to Remark \ref{rmk:abel}, $c_2(X) \cdot F = 0$, hence we can conclude that $c_2(X)\cdot (N_{t_0})^{n - 2} =  c_2(X)\cdot H^{n - 2} > 0$ thanks to \cite[Theorem 1.1]{Miy87}, because $H$ is ample and $c_2(X) \ne 0$, see Remark \ref{rmk:abel}. 
Since we have $\nu(N_{t_0}) = n - 1$ we can conclude thanks to Corollary \ref{cor:num}.

Let us consider now case (II). Let $a$ and $b$ positive natural numbers such that $\bar{t} = \frac{b}{a}$. Then we have that $aN_{\bar{t}} = aH - bF = D_{a,b}$ is as in Lemma \ref{lem:van} and since $(aN_{\bar{t}})^n = a^n(N_{\bar{t}})^n > 0$ we can conclude thanks to Lemma \ref{lem:key}. 

\medskip

Finally, we give a proof of Corollaries \ref{cor:multsect} and \ref{cor:kob}.

\begin{proof} [Proof of Corollaries \ref{cor:multsect} and \ref{cor:kob}]
Let $F$ be the fiber of the abelian fibration, and let examine the two possible cases that occur during the proof of Theorem \ref{thm:applic}. 

In case (I), we get a a semiample divisor $N_{t_0}$ which induces an elliptic fiber space structure on $X$. Now, following the proof of Theorem \ref{thm:main}, either this fiber space is equidimensional or not. If it is equidimensional, then we necessarily have some singular fiber and the rational curve is found therein. It can be either a (singular) irreducible rational curve, or it can be a chain of smooth rational $(-2)$-curves. The class of such a fiber is given by $N_{t_0}^{n-1}$, where $n = \dim X$, and
$$
F\cdot N_{t_0}^{n-1}= \frac{1}{t_0}(H - N_{t_0})\cdot N_{t_0}^{n-1}= \frac{1}{t_0} H\cdot N_{t_0}^{n-1} - \frac{1}{t_0} N_{t_0}^n =
 \frac{1}{t_0} H\cdot N_{t_0}^{n-1}> 0.
$$
This means that this fiber is transversal to the abelian fibration, which provides immediately a rational multi-section if the fiber is irreducible. If it is a chain of rational curve, it suffices to observe that at least one of its components has to intersect positively $F$, since the whole fiber is an effective cycle intersecting $F$ positively.

Finally, if we are not in the equidimensional case, this means that there is a uniruled exceptional locus and inside it we find a rational curve, say $R$, which is contracted. But then $N_{t_0}\cdot R=0$, so
$$
F\cdot R=\frac{1}{t_0}(H - N_{t_0})\cdot R=\frac{1}{t_0}\,H\cdot R>0,
$$ 
and therefore $R$ is again transversal to the original fibration.

If, on the other hand, we are in case (II), the existence of a rational curve $R$ on $X$ is deduced from the logarithmic version of the Cone 
Theorem, in particular we have $N_{\bar{t}}\cdot R < 0$. Hence,
$$
F\cdot R = \frac{1}{\bar{t}}(H - N_{\bar{t}})\cdot R = \frac{1}{\bar{t}} H\cdot R- \frac{1}{\bar{t}}N_{\bar{t}}\cdot R > 0,
$$
and thus also in this case we get that $R$ is transverse to the starting abelian fibration. 

Summing up, we have found in all possible cases the desired rational curve giving a multi-section of the abelian fibration, and this proves Corollary \ref{cor:multsect}.

Finally, let $p,q\in X$ be two distinct points. If they both lie in a general fiber of the abelian fibration, which is a complex torus, then $d_X(p,q)=0$. If they both lie in a special fiber, their Kobayashi distance is again zero, since any special fiber is the limit of general fibers and the Kobayashi pseudodistance $d_X\colon X\times X\to\mathbb R$ is continuous in the Euclidean topology (see \cite[(3.1.13) Proposition]{Kob98}). If $p$ and $q$ lie in different fibers, we use the transversal rational curve found above to move from one fiber to another keeping zero Kobayashi distance, and we are done.
\end{proof}

\section{Appendix: A sufficient numerical criterion by Koll\'ar}\label{sec:kol}

To finish with, since we used it in a prominent way all along the paper, for the sake of completeness we give an overview of Koll\'ar's proof of the abundance-type result for nef line bundles of numerical dimension $n-1$ \cite[Corollary 11]{Kol15}, in the slightly less general setting we need here (cf. Corollary \ref{cor:num}). So, let $X$ be a smooth $n$-dimensional projective manifold with trivial canonical bundle $K_X\simeq\mathcal O_X$, and suppose that $L\to X$ is a nef holomorphic line bundle of numerical dimension $\nu(L)=n-1$. One wants to show that $L$ is semi-ample. For this purpose, it is sufficient to show that the Kodaira--Iitaka dimension $\kappa(L)$ of $L$, which is in general less than or equal to its numerical dimension, satisfies indeed $\kappa(L)=\nu(L)$. For, in this case one can then apply the following theorem, which we state in an oversimplified version.

\begin{theorem}[see {\cite{Kaw85,KMM87} and also \cite[Theorem 1.1]{Fuj11}}]
	Let $X$ be a smooth projective manifold with trivial canonical bundle, and $L\to X$ a nef line bundle. Then $L$ is semi-ample if and only if its Kodaira--Iitaka dimension equals its numerical dimension.
\end{theorem}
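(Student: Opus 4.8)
The plan is to prove the two implications separately; the substantive one is that abundance, i.e. the equality $\kappa(L)=\nu(L)$, forces semi-ampleness. For the \emph{only if} implication I would argue directly: if $L$ is semi-ample, pick $m>0$ with $L^{\otimes m}$ globally generated, let $g\colon X\to Y$ be the induced morphism onto a normal projective variety with $g_*\mathcal O_X\simeq\mathcal O_Y$ and $L^{\otimes m}\simeq g^*A$ for an ample $A$ on $Y$. Then $\kappa(L)=\dim Y$ by construction, while $\nu(L)=\nu(g^*A)=\dim Y$ as well, since $(g^*A)^{\dim Y}\cdot H^{n-\dim Y}>0$ for $H$ ample on $X$ and $(g^*A)^{\dim Y+1}=0$.

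For the \emph{if} implication I would assume $\kappa:=\kappa(L)=\nu(L)$ and proceed by induction on $n=\dim X$, first disposing of the two extreme values of $\kappa$. If $\nu(L)=n$ then $L^n>0$, so $L$ is nef and big by \cite[Theorem 2.2.16]{Laz04}; since $K_X\simeq\mathcal O_X$, the pair $(X,0)$ is klt and $L-K_X\simeq L$ is nef and big, so $L$ is semi-ample by the base-point-free theorem \cite[Theorem 3.3]{KM98}. If $\kappa=0$ then $\nu(L)=0$, hence $L$ is numerically trivial; choosing $m$ with $h^0(X,mL)>0$ gives $mL\sim D\geq 0$ with $D\equiv 0$, so $D=0$ and $mL\simeq\mathcal O_X$, whence $L$ is torsion and in particular semi-ample.

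The remaining range $0<\kappa<n$ is the heart of the matter. I would choose a birational modification $\mu\colon X'\to X$ with $X'$ smooth projective on which the Iitaka fibration of $L$ is realised by a morphism $f\colon X'\to Y$ with connected fibres onto a smooth projective $Y$ with $\dim Y=\kappa$, and set $L'=\mu^*L$, which is nef with $\kappa(L')=\nu(L')=\kappa$. It suffices to prove $L'$ semi-ample: indeed $H^0(X',mL')=H^0(X,mL)$ forces $\operatorname{Bs}|mL'|=\mu^{-1}(\operatorname{Bs}|mL|)$, so $\operatorname{Bs}|mL'|=\emptyset$ implies $\operatorname{Bs}|mL|=\emptyset$. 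For a general fibre $F$ of $f$, the restriction $\mu|_F$ is an isomorphism onto a general fibre of the Iitaka map of $L$, so $K_F\simeq\mathcal O_F$ and $\kappa(F,L'|_F)=0$; and, using that $f$ is the Iitaka fibration together with $\nu(L')=\kappa(L')$, one concludes that $L'|_F$ is numerically trivial, hence, being also of Iitaka dimension $0$, torsion. Then I would invoke Kodaira's canonical bundle formula for fibre spaces whose general fibre has Iitaka dimension zero, in the form due to Kawamata, Ambro and Fujino--Mori: after replacing $Y$ by a suitable higher birational model, this provides an effective $\mathbb Q$-divisor $B_Y$ with $(Y,B_Y)$ klt, a nef $\mathbb Q$-divisor $M_Y$ (the moduli part) and a $\mathbb Q$-divisor $L_Y$ such that $K_Y+B_Y+M_Y\sim_{\mathbb Q}0$ and $L'\sim_{\mathbb Q}f^*L_Y$. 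Since $f$ is surjective and $L'$ is nef, $L_Y$ is nef, and $\nu(L_Y)=\nu(L')=\dim Y$ makes $L_Y$ big \cite[Theorem 2.2.16]{Laz04}; hence $L_Y-(K_Y+B_Y)\sim_{\mathbb Q}L_Y+M_Y$ is nef and big, so \cite[Theorem 3.3]{KM98} applied to the klt pair $(Y,B_Y)$ shows $L_Y$ semi-ample. Therefore $f^*L_Y\sim_{\mathbb Q}L'$, and hence $L$, is semi-ample.

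The main obstacle is the step of the previous paragraph involving the canonical bundle formula: the $\mathbb Q$-linear descent $L'\sim_{\mathbb Q}f^*L_Y$ with $(Y,B_Y)$ klt and $M_Y$ nef, together with the numerical triviality of $L'|_F$ on the general fibre. This is precisely where the Hodge-theoretic semipositivity of the moduli part and Kawamata's analysis of the numerical-dimension-zero case enter, and it is what places the theorem at the level of the abundance conjecture rather than making it a formal consequence of the base-point-free theorem; the remaining ingredients --- the two extreme cases, the elementary numerical computations, and the final application of base-point-freeness on $Y$ --- are routine.
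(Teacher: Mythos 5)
First, a point of comparison: the paper does not prove this statement at all. It is quoted as a known theorem, attributed to Kawamata, Kawamata--Matsuda--Matsuki and Fujino, and used as a black box; the only remark the authors make is the one you also make, namely that the ``only if'' direction is elementary. So there is no proof in the paper to measure your argument against, and your write-up should be judged as an independent sketch. Your easy direction and the two extreme cases ($\nu=n$ via nef and big plus base-point-freeness, $\kappa=\nu=0$ via ``effective and numerically trivial implies torsion'') are correct and routine, and the overall architecture of your main case --- pass to a smooth model of the Iitaka fibration, show $L'$ is $\mathbb{Q}$-linearly a pullback of a nef and big divisor from the base, and apply the base-point-free theorem to a klt pair on the base --- is genuinely the strategy of the modern proofs (it is essentially Fujino's route in \cite{Fuj11}).

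However, as a proof the proposal has real gaps at exactly the two places you flag as ``the main obstacle,'' and these are not merely technical. (1) The numerical triviality of $L'|_F$ on a general Iitaka fibre does not follow formally from $\kappa(L')=\nu(L')$ and the definition of the Iitaka fibration; it requires an intersection-theoretic argument (write $mL'\sim f^*A_m+F_m$ with $F_m$ the fixed part, and propagate $L'^{\kappa+1}\cdot H^{n-\kappa-1}=0$ through this decomposition using nefness and effectivity) --- this is the content of Kawamata's structure theorem for nef and abundant divisors, and asserting it is assuming a statement essentially equivalent to the theorem. (2) The canonical bundle formula does not directly give you $L'\sim_{\mathbb{Q}}f^*L_Y$. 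What the lc-trivial fibration machinery descends is a divisor of the form $K_{X'}+B$ (or $K_{X'}+B+L'$) that is $\mathbb{Q}$-linearly trivial on the general fibre; here $K_{X'}\sim E$ with $E$ the $\mu$-exceptional divisor, and $E$ has no reason to be $f$-vertical, so $K_F=E|_F$ is a priori only effective, not $\mathbb{Q}$-trivial, and you must either work with the sub-pair $(X',-E)$ (whose discriminant on $Y$ need not be effective, complicating the klt hypothesis in the base-point-free theorem) or first prove $\kappa(F)=0$ and control $E|_F$. Moreover, even granting $L'|_F\sim_{\mathbb{Q}}0$, upgrading ``$mL'-f^*A_m$ is effective and vertical'' to ``$L'\sim_{\mathbb{Q}}f^*L_Y$ on the nose'' requires a negativity-lemma type argument after further birational modification. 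None of this is fatal --- it can all be done, and is done in the references the paper cites --- but as written your argument reduces the theorem to inputs (Kawamata's Proposition on nef and abundant divisors, the nef moduli part of the canonical bundle formula) that carry essentially all of its depth, so it is a roadmap rather than a proof. Given that the paper itself only cites the result, that is a defensible level of detail, but the two steps above should be stated as explicit appeals to precise references rather than as things ``one concludes.''
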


Observe that, in the theorem above, the \lq\lq only if\rq\rq{} part is straightforward, and the importance of this statement really relies upon the \lq\lq if\rq\rq{} part. Now, the asymptotic Riemann--Roch formula for $L$ reads, under our assumptions:
$$
\chi(X,L^{\otimes m})=\frac{L^{n-2}\cdot c_2(X)}{12(n-2)!}\,m^{n-2}+O(m^{n-3}).
$$
On the other hand, the Kawamata-Viehweg vanishing theorem \cite[Special case 6.13]{Dem01} gives, since $\nu(L)=n-1$ and $K_X\simeq\mathcal O_X$, that $H^q(X,L^{\otimes m})=\{0\}$ for all $m\ge 1$ and all $q>1$. Therefore, we obtain
$$
\begin{aligned}
h^0(X,L^{\otimes m}) & \ge h^0(X,L^{\otimes m}) - h^1(X,L^{\otimes m}) \\
& = \chi(X,L^{\otimes m})=\frac{L^{n-2}\cdot c_2(X)}{12(n-2)!}\,m^{n-2}+O(m^{n-3}).
\end{aligned}
$$
Thus, if $L^{n-2}\cdot c_2(X)>0$, we get that $\kappa(L)\ge n-2$. By \cite{Miy87}, it always holds true that $L^{n-2}\cdot c_2(X)\ge 0$, and therefore if $L^{n-2}\cdot c_2(X)\ne 0$, then indeed we have that $\kappa(L)\ge n-2$. 

We now proceed by contradiction and suppose that $\kappa(L)=n-2<n-1=\nu(L)$. Let's consider the Iitaka fibration (cf. \cite[Theorem 2.1.33]{Laz04}) associated to $L$. Namely, there exists for any large $m$ divisible enough a commutative diagram 
$$
\xymatrix{
	X \ar@{-->}[d]_{\phi_m} & X_\infty \ar[l]_{u_\infty} \ar[d]^{\phi_\infty}\\
	Y_m & Y_\infty \ar@{-->}[l]^{\nu_m}
}
$$
of rational maps and morphisms, where the horizontal maps are birational, $u_\infty$ is a morphism and $\phi_\infty$ determines an algebraic fiber space structure on $X_\infty$. Here, $\phi_m$ is the rational map associated to the linear system $|L^{\otimes m}|$. One has that $\dim Y_\infty=\kappa(X,L)=n-2$ and moreover, if we set $L_\infty=u_\infty^*L$ and take $S\subset X_\infty$ to be a very general fiber of $\phi_\infty$ (so that $S$ is in particular is a smooth surface), then $\kappa(S,L_\infty|_S)=0$. Observe that one can suppose $X_\infty$ to be smooth and hence $Y_\infty$ normal. 

Next, remark that, since $(u_\infty)_*\mathcal O_{X_\infty}\simeq\mathcal O_X$, one has that $\kappa(L_\infty)=\kappa(L)=n-2$. Moreover, since $u_\infty$ is a proper surjective map and $X_\infty$ is projective, $L_\infty$ is nef and $\nu(L_\infty)=\nu(L)=n-1$. 
Another remark is that $K_{X_\infty}$ is linearly equivalent to the effective divisor given by $E:=\{\det du_\infty=0\}$. 

\begin{lemma}\label{lem:kod0}
	The surface $S$ has Kodaira dimension zero.
\end{lemma}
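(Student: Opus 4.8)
The plan is to prove that $|mK_S|$ consists of the single divisor $mK_S$ for every $m\geq 1$, which forces $\kappa(S)=0$. Since $S$ is a smooth fibre of $\phi_\infty$ over a general, hence smooth, point of the normal variety $Y_\infty$, its normal bundle in $X_\infty$ is trivial of rank $\dim Y_\infty$, so adjunction gives $K_S\simeq K_{X_\infty}|_S$. As $K_{X_\infty}\sim E$ with $E=\{\det du_\infty=0\}\geq 0$ the $u_\infty$-exceptional divisor, and as a general fibre $S$ cannot lie inside the proper divisor $E$ (otherwise $E$ would contain every general fibre and equal $X_\infty$), we get $K_S\sim E|_S\geq 0$. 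It therefore suffices to show that the effective divisor $E|_S$ is \emph{rigid}, i.e.\ $h^0\bigl(S,\mathcal O_S(mE|_S)\bigr)=1$ for all $m\geq 1$.

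To this end I would locate $\Supp(E|_S)$ inside the exceptional locus of the birational morphism $u_\infty|_S\colon S\to S_0:=u_\infty(S)$. Write $E=E_h+E_v$, where $E_v$ collects the prime components of $E$ that $\phi_\infty$ maps to a proper subvariety of $Y_\infty$ and $E_h$ collects those dominating $Y_\infty$; for general $S$ one has $E_v\cap S=\emptyset$, hence $E|_S=E_h|_S$. Let $E_i$ be a prime component of $E_h$ and let $\psi\colon X\dashrightarrow Y_\infty$ be the rational map with $\phi_\infty=\psi\circ u_\infty$, so that $\phi_\infty|_{E_i}$ factors through $u_\infty|_{E_i}\colon E_i\to u_\infty(E_i)$. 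Since $E_i$ is $u_\infty$-exceptional, $\dim u_\infty(E_i)\leq n-2$, while surjectivity of $\phi_\infty|_{E_i}$ onto $Y_\infty$ forces $\dim u_\infty(E_i)\geq n-2$; hence $\dim u_\infty(E_i)=n-2$ and the general fibre of $u_\infty|_{E_i}$ is one-dimensional. Consequently $E_i\cap S$, being a general fibre of $\phi_\infty|_{E_i}$, is a union of curves, each of which $u_\infty$ contracts to a point of $S_0$. Passing to the normalization $\nu\colon\widetilde{S_0}\to S_0$, through which $u_\infty|_S$ factors as a proper birational morphism $v\colon S\to\widetilde{S_0}$ from the smooth projective surface $S$ to the normal projective surface $\widetilde{S_0}$, we conclude that $\Supp(E|_S)\subseteq\Exc(v)$.

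Finally, $\Exc(v)$ carries a negative-definite intersection form (the classical property of exceptional loci of resolutions of normal surface singularities), and an effective divisor supported on such a configuration is rigid: if $D'\in|mE|_S|$, then $v_*D'$ is effective and linearly equivalent to $v_*(mE|_S)=0$ on $\widetilde{S_0}$, hence $v_*D'=0$, so $D'$ is $v$-exceptional; then $D'-mE|_S$ is supported on $\Exc(v)$ and numerically trivial, hence zero by non-degeneracy of the intersection form. Thus $h^0\bigl(S,\mathcal O_S(mK_S)\bigr)=h^0\bigl(S,\mathcal O_S(mE|_S)\bigr)=1$ for all $m\geq 1$, i.e.\ $\kappa(S)=0$.

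The main obstacle I anticipate is the second step. A priori a component of $E$ could meet the general fibre $S$ along a curve mapping \emph{birationally} onto a curve of $S_0$; such a curve would not be $v$-exceptional, and it would wreck the rigidity argument. Ruling this out is exactly the content of the dimension computation $\dim u_\infty(E_i)=n-2$, which pins the $u_\infty$-fibres inside a horizontal component of $E$ to be one-dimensional and to agree generically with the fibres of $\phi_\infty|_{E_i}$.
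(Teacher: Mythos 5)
Your strategy---reduce the lemma to the rigidity of the effective divisor $E|_S\sim K_S$ and deduce rigidity from negative-definiteness of the intersection form on the $v$-exceptional curves---is genuinely different from the paper's proof, which instead sandwiches $0\le\kappa(S)=\kappa(S,K_S)\le\kappa(S,K_S\otimes L_\infty|_S)=0$, the last equality coming from the fact that $\phi_\infty$ is also the Iitaka fibration of $K_{X_\infty}\otimes L_\infty$ (so the restriction to a very general fibre has Iitaka dimension zero). Unfortunately your route has a gap at exactly the step you flagged: the inclusion $\Supp(E|_S)\subseteq\Exc(v)$ is not justified and is false in general. The factorization $\phi_\infty=\psi\circ u_\infty$ holds only where the rational map $\psi$ is defined, and the indeterminacy locus of $\psi$ typically contains $u_\infty(\Exc(u_\infty))$---that is precisely why the $E_i$ are exceptional---so you cannot conclude that $\phi_\infty|_{E_i}$ factors through $u_\infty|_{E_i}$. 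More importantly, even granting $\dim u_\infty(E_i)=n-2$, so that both $u_\infty|_{E_i}$ and $\phi_\infty|_{E_i}$ have one-dimensional general fibres, these are two \emph{a priori unrelated} families of curves on the $(n-1)$-fold $E_i$; nothing forces them to coincide, and in the typical situation they are transverse, making $E_i\cap S$ a multisection of $u_\infty|_{E_i}$ rather than a union of its fibres.

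Here is the standard picture in which the claim fails. Take $n=3$ and suppose the indeterminacy of $\phi_m$ is resolved by a single blow-up of a smooth curve $\Gamma$ in the base locus, the members of the mobile pencil being smooth and pairwise generically transverse along $\Gamma$. Then $E$ is a $\mathbb{P}^1$-bundle over $\Gamma$ and $\phi_\infty$ restricted to each exceptional fibre $\mathbb{P}^1$ is non-constant (this is exactly what separates the members of the pencil and resolves the indeterminacy), so $E\cap S$ is a section of $E\to\Gamma$, mapping isomorphically onto $\Gamma$ and not contracted by $u_\infty$. Meanwhile $u_\infty|_S\colon S\to S_0$ is an isomorphism, since the strict transform of a surface smooth along $\Gamma$ is its blow-up along a Cartier divisor; hence $\Exc(v)=\emptyset$ while $E|_S=\Gamma\ne 0$, and the negative-definiteness argument has nothing to act on. In this situation $K_S\sim\Gamma$ is a nef effective divisor with $\Gamma^2=0$, so $\kappa(S)\in\{0,1\}$, and excluding $\kappa(S)=1$ genuinely requires the extra input the paper uses, namely $\kappa(S,K_S\otimes L_\infty|_S)=0$ coming from the Iitaka fibration of $K_{X_\infty}\otimes L_\infty$; a purely exceptional-divisor rigidity argument cannot see this.
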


\begin{proof}

The short exact sequence induced by the differential of $\phi_\infty$ gives $K_S\simeq K_{X_\infty}|_S$. Since $K_{X_\infty}\sim E\ge 0$ is effective and $S$ is not contained in $E$, it follows that $\kappa(S)\ge 0$. We now claim that $\kappa(S,K_S\otimes L_\infty|_S)=0$. This is enough to conclude. Indeed, since (some power of) $L_\infty|_S$ is effective, then 
$$
\kappa(S)=\kappa(S,K_S)\le\kappa(S,K_S\otimes L_\infty|_S)=0.
$$
It follows that $\kappa(S)=0$.

We now come back to the claim. Observe that, by the projection formula, $(u_\infty)_*\mathcal O_{X_\infty}(K_{X_\infty}^{\otimes m}\otimes L_\infty^{\otimes m})\simeq\mathcal O_X(L^{\otimes m})$, since we have that $\mathcal O_X(K_X)\simeq\mathcal O_X$ and thus $(u_\infty)_*\mathcal O_{X_\infty}(K_{X_\infty}^{\otimes m})\simeq\mathcal O_X$. This implies that the map $\psi_m$ induced by the linear system $|K_{X_\infty}^{\otimes m}\otimes L_\infty^{\otimes m}|$ is nothing but the composite $\phi_m\circ u_\infty$. To conclude, it suffices to follow \textsl{verbatim} \cite[Proof of Theorem 2.1.33, Step 3]{Laz04}, replacing $L_\infty$ by $K_{X_\infty}\otimes L_\infty$ (observe that, by \cite[Remark 2.1.34]{Laz04}, a posteriori we get that $\phi_\infty$ is then also the Iitaka fibration associated to $K_{X_\infty}\otimes L_\infty$).
\end{proof}

Now, take $D$ to be an effective divisor in the linear system $|L_\infty^{\otimes m}|$, $m\gg 1$. Then either $D$ is $\phi_\infty$-horizontal, \textsl{i.e.} $\phi_\infty(D)=Y_\infty$, or it is $\phi_\infty$-vertical, \textsl{i.e.} $\phi_\infty(D)\subsetneq Y_\infty$. In the first case, we shall obtain a contradiction by showing that this would imply $\kappa(S,L_\infty|_S)\ge 1$; in the second case, we shall get that $\nu(L_\infty)<n-1$, which is again a contradiction.

\subsection*{The horizontal case} In this case, we are in the situation where the restriction $D_S$ of $D$ to $S$ is a nef, effective, non zero divisor. In particular $D_S$ is not numerically trivial. Let $S_{\text{min}}$ be the minimal model of $S$, obtained by contracting all $(-1)$-curves on $S$. Call $\mu\colon S\to S_{\text{min}}$ the corresponding morphism. Observe that $D_S$ cannot be entirely contracted by $\mu$, since in this case one would have $D_S^2<0$, contradicting the nefness of $D_S$. Indeed, take any ample Cartier divisor $H$ on $S_{\text{min}}$. Its pull-back $\mu^*H$ to $S$ is big and nef so that $\mu^*H^2>0$; on the other hand, if $D_S$ were entirely contracted by $\mu$, we would have $\mu^*H\cdot D_S=0$ since $\mu^*H|_{D_S}$ would be trivial. Since $D_S$ is not numerically trivial then the Hodge Index Theorem would give, as claimed, $D_S^2<0$. Thus, the proper push-forward $\mu_*D_S=:D_{S_{\text{min}}}$ is again a nef effective non zero divisor on $S_{\text{min}}$. Observe that $\kappa(D_S)\ge\kappa(D_{S_{\text{min}}})$. Now, since $\kappa(S_\text{min})=\kappa(S)$, $\kappa(S)=0$ by Lemma \ref{lem:kod0} and $K_{S_\text{min}}$ is nef, then by abundance for surfaces we see that $K_{S_\text{min}}$ is semi-ample, and thus a torsion line bundle. In particular, $\kappa(D_{S_{\text{min}}})=\kappa(K_{S_{\text{min}}}+D_{S_{\text{min}}})$. Now, there exists a (small) positive rational number $\varepsilon$ such that $(S_\text{min},\varepsilon D_{S_{\text{min}}})$ is a klt pair, and moreover $K_{S_{\text{min}}}+\varepsilon D_{S_{\text{min}}}$ is nef. Thus, by log-abundance for surfaces $K_{S_{\text{min}}}+\varepsilon D_{S_{\text{min}}}$ is semi-ample and it is not numerically trivial by construction. This implies that $\kappa(K_{S_{\text{min}}}+\varepsilon D_{S_{\text{min}}})\ge 1$. But 
$$
\kappa(D_S)\ge\kappa(D_{S_{\text{min}}})=\kappa(\varepsilon D_{S_{\text{min}}})=\kappa(K_{S_{\text{min}}}+\varepsilon D_{S_{\text{min}}})\ge 1,
$$
contradiction.

\subsection*{The vertical case} We finally treat the case where $\phi_\infty(D)\subsetneq Y_\infty$. The hypothesis implies that the coherent sheaf $(\phi_\infty)_*\mathcal O_{X_\infty}(-D)$ on $Y_\infty$ is non zero. Thus, by the Cartan--Serre--Grothendieck Theorem, there exists an ample line bundle $A\to Y_\infty$ such that $H^0\bigl(Y_\infty,A\otimes(\phi_\infty)_*\mathcal O_{X_\infty}(-D)\bigr)\ne\{0\}$. By the projection formula we thus get $H^0\bigl(X_\infty,\phi_\infty^*A\otimes\mathcal O_{X_\infty}(-D)\bigr)\ne\{0\}$. In particular, $\phi_\infty^*A$ is linearly equivalent to $D+F$, where $F$ is an effective divisor on $X_\infty$. 

Therefore, for all integer $r\ge 1$, we have
$$
\begin{aligned}
\phi_\infty^*A^r-D^r & =(\phi_\infty^*A-D)\cdot\biggl(\sum_{i=0}^{r-1}\phi_\infty^*A^{i}\cdot D^{r-1-i}\biggr) \\
& = F\cdot\biggl(\sum_{i=0}^{r-1}\phi_\infty^*A^{i}\cdot D^{r-1-i}\biggr)\ge 0,
\end{aligned}
$$
since both $\phi_\infty^*A$ and $D$ are nef and $F$ is effective. In particular, 
$$
0\le D^{n-1}\le\phi_\infty^*A^{n-1}=0,
$$
since $\phi_\infty^*A$ comes from an $(n-2)$-dimensional variety. Thus, we have that $\nu(L_\infty)=\nu(D)<n-1$, which gives the desired contradiction.

\end{document}